\newif\ifPDF
\newtheorem{thm}{Theorem}[section]
\newtheorem{cor}[thm]{Corollary}
\newtheorem{lem}[thm]{Lemma}
\theoremstyle{definition}
\newtheorem{defn}[thm]{Definition}
\theoremstyle{remark}
\newtheorem{rem}[thm]{Remark}
\newtheorem{example}[thm]{Example}
\numberwithin{equation}{section}
\newcommand{\norm}[1]{\left\Vert#1\right\Vert}
\newcommand{\abs}[1]{\left\vert#1\right\vert}
\newcommand{\Real}{\mathbb R}
\newcommand{\Int}{\mathbb Z}
\newcommand{\Comp}{\mathbb C}
\newcommand{\eps}{\varepsilon}
\begin{document}


\title{Comparison radius and mean topological dimension: $\Int^d$-actions}

\author{Zhuang Niu}
\address{Department of Mathematics, University of Wyoming, Laramie, Wyoming, USA, 82071}
\email{zniu@uwyo.edu}

\thanks{The research is supported by the NSF grant DMS-1800882.}


\begin{abstract}
Consider a minimal free topological dynamical system $(X, T, \Int^d)$. It is shown that the comparison radius of the crossed product C*-algebra $\mathrm{C}(X) \rtimes \Int^d$ is at most the half of the mean topological dimension of $(X, T, \Int^d)$. As a consequence, the C*-algebra $\mathrm{C}(X) \rtimes \Int^d$ is classifiable if $(X, T, \Int^d)$ has zero  mean dimension.
\end{abstract}

\maketitle

\section{Introduction}
Consider a topological dynamical system $(X, \sigma, \Gamma)$, where $X$ is a compact Hausdorff space and $\Gamma$ is a discrete amenable group. The mean (topological) dimension of $(X, \sigma, \Gamma)$, denoted by $\mathrm{mdim}(X, \sigma, \Gamma)$, was introduced by Gromov (\cite{Gromov-MD}), and then was developed and studied systematically by Lindenstrauss and Weiss (\cite{Lindenstrauss-Weiss-MD}). It is a numerical invariant, taking value in $[0, +\infty]$, to measure the complexity of $(X, \sigma, \Gamma)$ in terms of dimension growth with respect to partial orbits.

On the other hand, for a general C*-algebra $A$, the comparison radius,  introduced by Toms (\cite{RC-Toms}) and denoted by $\mathrm{rc}(A)$, plays a role as the dimension growth of $A$. A typical example is that the comparison radius of $\mathrm{M}_n(\mathrm{C}(X))$ is at most $\frac{1}{2} \frac{\mathrm{dim}(X)}{n}$, which is the half of the dimension ratio of $\mathrm{M}_n(\mathrm{C}(X))$. 

In this paper, it is shown that if $(X, T, \Int^d)$ is a free minimal $\Int^d$-action on a separable compact Hausdorff space $X$, then 
\begin{equation}\label{rc-md-comp}
\mathrm{rc}(\mathrm{C}(X) \rtimes \Int^d) \leq \frac{1}{2} \mathrm{mdim}(X, T, \Int^d),
\end{equation} 
where $\mathrm{C}(X) \rtimes \Int^d$ is  the crossed product C*-algebra associated to $(X, T, \Int^d)$. The argument is in the line with \cite{Niu-MD-Z}. That is, the dynamical system $(X, T, \Int^d)$ is shown to have a Cuntz comparison property on open sets and to have the Uniform Rokhlin Property; and then \eqref{rc-md-comp} follows from Theorem 8.8 of \cite{Niu-MD-Z}. The adding-one-dimension and going-down argument of \cite{GLT-Zk} play a crucial role in the proof of the Cuntz comparison property and the (URP).


\section{Notation and Preliminaries}

\subsection{Topological Dynamical Systems}

In this paper, one only considers $\Int^d$ actions on a compact separable Hausdorff space $X$.
\begin{defn}
Consider a topological dynamical system $(X, T, \Int^d)$. 
A closed set $Y \subseteq X$ is said to be invariant if $T^n(Y) = Y$, $n \in\Int^d$, and $(X, T, \Int^d)$ is said to be minimal if $\varnothing$ and $X$ are the only invariant closed subsets. The dynamical system $(X, T, \Int^d)$ is free if for any $x\in X$, $\{n\in \Int^d: T^n(x) = x\} = \{0\}$.
\end{defn}

\begin{rem}
The dynamical system $(X, T, \Int^d)$ is induced by $d$ commuting homeomorphisms of $X$, and vise versa.
\end{rem}

\begin{defn}
A Borel measure $\mu$ on $X$ is invariant under the action $\sigma$ if $\mu(E) = \mu(T^n(E))$, for any $n\in\Int^d$ and any Borel set $E\subseteq X$. Denote by $\mathcal M_1(X, T, \Int^d)$ the collection of all invariant Borel probability measures on $X$. It is a Choquet simplex under the weak* topology.
\end{defn}

\begin{defn}[see \cite{Gromov-MD} and \cite{Lindenstrauss-Weiss-MD}]
Consider a topological dynamical system $(X, T, \Int^d)$, and let $E$ be a subset of $X$. The orbit capacity of $E$ is defined by
$$\mathrm{ocap}(E):=\lim_{N\to\infty}\frac{1}{N^d}\sup_{x\in X}\sum_{n\in\{0, 1, ..., N-1\}^d} \chi_E(T^n(x)),$$
where $\chi_E$ is the characteristic function of $E$. The limit always exists.
\end{defn}

\begin{defn}[see \cite{Lindenstrauss-Weiss-MD}]
Let $\mathcal U$ be an open cover of $X$. Define
$$D(\mathcal U)=\min\{\mathrm{ord}(\mathcal V): \mathcal V \preceq U\},$$ where $\mathcal V=-1 + \sup_{x\in X}\sum_{V\in\mathcal V} \chi_V(x)$.

Consider a topological dynamical system $(X, T, \Int^d)$. Then the topological mean dimension of $(X, T, \Int^d)$ is defined by
$$\mathrm{mdim}(X, T, \Int^d):=\sup_{\mathcal U}\lim_{N\to\infty}\frac{1}{N^d}D(\bigvee_{n\in\{0, 1, ..., N-1\}^d} T^{-n}(\mathcal U)),$$
where $\mathcal U$ runs over all finite open covers of $X$.
\end{defn}

\begin{rem}
It follows from the definition that if $\mathrm{dim}(X) < \infty$, then  $\mathrm{mdim}(X, T, \Int^d)=0$;
By \cite{Lindenstrauss-Weiss-MD}, if $(X, T, \Int^d)$ has at most countably many ergodic measures, then $\mathrm{mdim}(X, T, \Int^d)=0$;   and by \cite{Lind-MD}, if $(X, T, \Int^d)$ has finite topological entropy, then $\mathrm{mdim}(X, T, \Int^d)=0$.
\end{rem}

\subsection{Crossed product C*-algebras}
Consider a topological dynamical system $(X, T, \Int^d)$. 
Then the crossed product C*-algebra $\mathrm{C}(X)\rtimes \Int^d$ is the universal C*-algebra 
$$A=\textrm{C*}\{f, u_n;\ u_n fu_n^*=f\circ T^n,\ u_mu^*_n=u_{m-n},\ u_0=1,\  f\in \mathrm{C}(X),\ m, n\in \Int^d \}.$$
The C*-algebra $A$ is nuclear, and if $T$ is minimal, the C*-algebra $A$ is simple. Moreover, the simplex of tracial states of $\mathrm{C}(X) \rtimes_\sigma \Gamma$ is canonically homeomorphic to the simplex of the invariant probability measures of $(X, T, \Int^d)$.

\subsection{Cuntz comparison of positive elements of a C*-algebra}

\begin{defn}
Let $A$ be a C*-algebra, and let $a, b\in A^+$. Then we say that $a$ is Cuntz subequivalent to $b$, denote by $a \precsim b$, if there are $x_i$, $y_i$, $i=1, 2, ...$, such that $$\lim_{n\to\infty} x_iby_i = a,$$ and we say that $a$ is Cuntz equivalent to $b$ if $a\precsim b$ and $b \precsim a$.

Let $\tau: A \to\Comp$ be a trace. Define the rank function
$$\mathrm{d}_\tau(a):=\lim_{n\to\infty}\tau(a^{\frac{1}{n}})=\mu_\tau(\mathrm{sp}(a)\cap(0, +\infty)),$$ where $\mu_\tau$ is the Borel measure induced by $\tau$ on the spectrum of $a$.  It is well known that
$$\mathrm{d}_\tau(a) \leq \mathrm{d}_\tau(b),\quad \textrm{if $a\precsim b$}.$$
\end{defn}
\begin{example}
Consider $h\in \mathrm{C}(X)^+$ and let $\mu$ be a probability measure on $X$. Then
$$\mathrm{d}_{\tau_\mu} = \mu(f^{-1}(0, +\infty)),$$
where $\tau_\mu$ is the trace of $\mathrm{C}(X)$ induced by $\mu$. 

Let $f, g\in\mathrm{C}(X)$ be positive elements. Then $f$ and $g$ are Cuntz equivalent if and only if $f^{-1}(0, +\infty)= g^{-1}(0, +\infty)$. That is, their equivalence classes are determined by their open support. On the other hand, for each open set $E \subseteq X$, pick a continuous function $$\varphi_{E}: X \to [0, +\infty)\quad \textrm{such that} \quad E = \varphi_E^{-1}(0, +\infty).$$ For instance, one can pick $\varphi_E(x) = d(x, X\setminus E)$, where $d$ is a compatible metric on $X$. This notation will be used throughout this paper. Note that the Cuntz equivalence class of $\varphi_E$ is independent of the choice of individual function $\varphi_E$.
\end{example}

\begin{defn}\label{defn-p-cut}
Let $a\in A^+$, where $A$ is a C*-algebra, and let $\eps>0$. Define
$$(a - \eps)_+ =f(a) \in A,$$
where $f(t)=\max\{t-\eps, 0\}$.
\end{defn}
A frequently used fact on the Cuntz comparison is the following.
\begin{lem}[Section 2 of \cite{RorUHF2}]
Let $a, b$ be positive elements of a C*-algebra $A$. Then $a \precsim b$ if and only if $(a-\eps)_+ \precsim b$ for all $\eps >0$.
\end{lem}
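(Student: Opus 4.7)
The plan is to prove the two implications separately; both use only functional calculus and the triangle inequality.

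For the ``only if'' direction I would first establish the auxiliary fact that $(a-\eps)_+ \precsim a$. Define the continuous bounded function $h_\eps: [0,\infty) \to [0,\infty)$ by $h_\eps(t) = 0$ for $t \leq \eps$ and $h_\eps(t) = \sqrt{(t-\eps)/t}$ for $t > \eps$. A direct check shows that $h_\eps(t) \cdot t \cdot h_\eps(t) = (t-\eps)_+$ for all $t \geq 0$, so by functional calculus $h_\eps(a)\, a\, h_\eps(a) = (a-\eps)_+$. Now, given $a \precsim b$ witnessed by $x_n b y_n \to a$, multiplying on both sides by $h_\eps(a)$ yields
\[
(h_\eps(a) x_n)\, b\, (y_n h_\eps(a)) \;\To\; h_\eps(a)\, a\, h_\eps(a) \;=\; (a-\eps)_+,
\]
which is exactly $(a-\eps)_+ \precsim b$.

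For the ``if'' direction I would combine the hypothesis with the elementary estimate $\norm{a - (a-\eps)_+} \leq \eps$, which follows from $0 \leq t - (t-\eps)_+ \leq \eps$ on $[0,\infty)$ together with the continuous functional calculus bound. For each positive integer $n$, the assumption $(a - 1/n)_+ \precsim b$ supplies elements $x_n, y_n \in A$ with $\norm{x_n b y_n - (a-1/n)_+} < 1/n$, obtained by taking any sufficiently late term of the defining sequence for $(a-1/n)_+ \precsim b$. The triangle inequality then gives $\norm{x_n b y_n - a} < 2/n \to 0$, so $x_n b y_n \to a$ and hence $a \precsim b$.

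There is no genuine obstacle; the only non-routine ingredient is recognizing the functional-calculus factorization $(a-\eps)_+ = h_\eps(a)\, a\, h_\eps(a)$ used in the ``only if'' direction, and the rest of the argument is straightforward $\eps$-bookkeeping with the triangle inequality.
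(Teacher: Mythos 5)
Your proof is correct: the factorization $(a-\eps)_+ = h_\eps(a)\,a\,h_\eps(a)$ cleanly gives the forward direction (and sidesteps any appeal to transitivity of $\precsim$ by composing directly with the witness sequence for $a \precsim b$), and the reverse direction is the standard $2/n$ triangle-inequality argument using $\norm{a-(a-1/n)_+}\leq 1/n$. The paper states this lemma without proof, citing Section 2 of \cite{RorUHF2}; your argument is essentially the standard one found there, so there is nothing to compare beyond noting that it is complete and correct.
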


\begin{defn}[Definition 6.1 of \cite{RC-Toms}]
Let $A$ be a C*-algebra. Denote by $\mathrm{M}_n(A)$ the C*-algebra of $n\times n$ matrices over $A$. Regard $\mathrm{M}_n(A)$ as the upper-left conner of $\mathrm{M}_{n+1}(A)$, and denote by $$\mathrm{M}_\infty(A) = \bigcup_{n=1}^\infty \mathrm{M}_n(A),$$ the algebra of all finite matrices over $A$.

The radius of comparison of a unital C*-algebra $A$, denoted by $\mathrm{rc}(A)$, is the infimum of the set of real numbers $r > 0$ such that  if $a, b\in (\mathrm{M}_\infty(A))^+$ satisfy
$$\mathrm{d}_\tau(a) + r < \mathrm{d}_\tau(b),\quad \tau\in\mathrm{T}(A),$$ then $a \precsim b$, where $\mathrm{T}(A)$ is the simplex of tracial states. (In \cite{RC-Toms}, the radius of comparison is defined in terms of quasitraces instead of traces; but since all the algebras considered in this note are nuclear, by \cite{Haagtrace}, any quasitrace actually is a trace.)
\end{defn}
\begin{example}
Let $X$ be a compact Hausdorff space. Then 
\begin{equation}\label{comp-dim-control}
\mathrm{rc}(\mathrm{M}_n(\mathrm{C}(X)))\leq \frac{1}{2}\frac{\mathrm{dim}(X) - 1}{n},
\end{equation} 
where $\mathrm{dim(X)}$ is the topological covering dimension of $X$ (a lower bound of $\mathrm{rc}(\mathrm{C}(X))$ in terms of cohomological dimension is given in \cite{EN-RC}).
\end{example}

The main result of this paper is a dynamical version of \eqref{comp-dim-control}; that is,
$$\mathrm{rc}(\mathrm{C}(X) \rtimes \Int^d) \leq \frac{1}{2}\mathrm{mdim}(X, T, \Int^d)$$
if $(X, T, \Int^d)$ is minimal and free (Corollary \ref{rc-mdim}).

\section{Adding one dimension, going-down argument, $R$-boundary points, and $R$-interior points}

Adding-one-dimension and going-down argument are introduced in \cite{GLT-Zk}, and they play a crucial role in this paper. Let us first take a brief review. Consider a minimal system $(X, T, \Int^d)$. Pick open sets $U' \subseteq U \subseteq X$ with $\overline{U'} \subseteq U$, and a continuous function $\varphi: X \to [0, 1] $ such that 
$$\varphi|_{U'}=1\quad\mathrm{and} \quad \varphi|_{X\setminus U} = 0.$$
Then there exist natural numbers $M\leq L$ such that
\begin{enumerate}
\item if $\varphi(x)>0$ for some $x\in X$, then $\varphi(T^n(x))=0$ for all nonzero $n\in\Int^k$ with $\abs{n}\leq M$; and
\item for any $x\in X$, there is $n\in\Int^d$ with $\abs{n}\leq L$ such that $\varphi(T^n(x))=1$.
\end{enumerate}

Pick $x\in X$. Following from \cite{GLT-Zk}, one considers the set
$$\{(n, \frac{1}{\varphi(T^n(x))}): n \in\Int^d,\ \varphi(T^n(x))\neq 0\} \subseteq \Real^{d+1},$$
and defines the Voronoi  cell $V(x, n) \subseteq \Real^{d+1}$ with center $(n, \frac{1}{\varphi(T^n(x))})$ by 
$$V(x, n) = \left\{\xi\in \Real^{k+1}: \norm{\xi-(n, \frac{1}{\varphi(T^n(x))})} \leq \norm{\xi-(m, \frac{1}{\varphi(T^m(x))})}, \forall m\in\Int^d \right\},$$
where $\norm{\cdot}$ is the $\ell^2$-norm on $\Real^{d+1}$.
If $\varphi(T^n(x))=0$, then put
$$V(x, n)=\varnothing.$$
One then has a tiling
$$\Real^{d+1} = \bigcup_{n\in\Int^{d}} V(x, n).$$

Pick $H> (L+ \sqrt{d})^2$. For each $n\in \Int^{d}$, define $$W_H(x, n)= V(x, n) \cap (\Real^d \times \{-H\}),$$ and one has a tiling
$$\mathcal W_H: \Real^{d} = \bigcup_{n\in\Int^{d}} W(x, n).$$
The following are some basic properties of this construction, and the proofs can be found in \cite{GLT-Zk}.
\begin{lem}[Lemma 4.1 of \cite{GLT-Zk}]\label{tiling-properties}
With the construction above, one has
\begin{enumerate}
\item $\mathcal W_H$ is continuous on $x$ in the following sense: Suppose that $W(x, n)$ has non-empty interior. For any $\eps>0$, if $y\in X$ is sufficiently close to $x$, then the Hausdorff distance between $W_H(x, n)$ and $W_H(y, n)$ are smaller than $\eps$.
\item $\mathcal W_H$ is $\Int^d$-equivariant: $W_H(T^m(x), n-m) = - m + W_H(x, n)$.
\item If $\varphi(T^n(x)) > 0$, then $$B_{\frac{M}{2}}(n, \frac{1}{\varphi(T^n(x))}) \subseteq V(x, n).$$
\item If $W_H(x, n)$ is non-empty, then $$1\leq \frac{1}{\varphi(T^n(x))} \leq 2.$$
\item If $(a, -H) \in V(x, n)$, then $$\norm{a - n} < L+\sqrt{d}.$$
\end{enumerate}
\end{lem}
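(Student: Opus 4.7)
The plan is to extract everything from the defining properties (1)--(2) of $M$ and $L$, the choice $H > (L+\sqrt{d})^2$, and the normalization $\varphi \leq 1$ (which forces every non-trivial center to have last coordinate at least $1$). For item (2), I would relabel indices: the center for $T^m(x)$ at index $k$ is $(k, 1/\varphi(T^{k+m}(x)))$, which under the substitution $k' := k+m$ becomes $(k' - m, 1/\varphi(T^{k'}(x)))$---a rigid translation by $(-m, 0)$ of the centers for $x$. Voronoi cells therefore translate by the same vector, giving $V(T^m(x), n-m) = (-m, 0) + V(x, n)$, which restricts to the claim on $\Real^d \times \{-H\}$. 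For item (3), I would apply property (1) of $M$ to the point $T^n(x)$: any $m \neq n$ with $\varphi(T^m(x)) > 0$ must satisfy $\abs{m - n} > M$, so the center $(n, 1/\varphi(T^n(x)))$ sits at $\ell^2$-distance at least $M$ from every other center, and the standard bisector argument places $B_{M/2}$ about it inside its own Voronoi cell.

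The substantive step handles (4) and (5) simultaneously via a single inequality. The key device is: for any $a \in \Real^d$, there exists $m^* \in \Int^d$ with $\varphi(T^{m^*}(x)) = 1$ and $\abs{a - m^*} \leq L + \sqrt{d}/2$. I would obtain $m^*$ by choosing a nearest lattice point $[a] \in \Int^d$ to $a$ (so $\abs{a - [a]} \leq \sqrt{d}/2$) and applying property (2) of $L$ to $T^{[a]}(x)$ to produce $n'$ with $\abs{n'} \leq L$ and $\varphi(T^{n'+[a]}(x)) = 1$; then $m^* := n' + [a]$ works. Now suppose $(a, -H) \in V(x, n)$; the inequality that $(a, -H)$ is closer to $(n, 1/\varphi(T^n(x)))$ than to $(m^*, 1)$ reads
\begin{equation*}
\abs{a - n}^2 + \bigl(H + 1/\varphi(T^n(x))\bigr)^2 \;\leq\; \abs{a - m^*}^2 + (H+1)^2 \;\leq\; (L+\sqrt{d}/2)^2 + (H+1)^2.
\end{equation*}
Dropping the second term on the left and using $1/\varphi(T^n(x)) \geq 1$ yields $\abs{a - n}^2 \leq (L+\sqrt{d}/2)^2 < (L+\sqrt{d})^2$, proving (5). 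Dropping the first term and using $H > (L+\sqrt{d})^2$ yields $(H + 1/\varphi(T^n(x)))^2 < H + (H+1)^2 = H^2 + 3H + 1 \leq (H+2)^2$, so $1/\varphi(T^n(x)) \leq 2$; the lower bound $1/\varphi(T^n(x)) \geq 1$ is immediate. This proves (4).

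Finally, for (1), I would localize using what has just been proved. By (5), $W_H(x, n)$ is contained in the box $B_{L+\sqrt{d}}(n) \times \{-H\}$, and by (4) the only centers whose Voronoi cells can meet $\Real^d \times \{-H\}$ in a fixed neighborhood of this box are the finitely many $(m, 1/\varphi(T^m(x)))$ with $\abs{m - n}$ bounded and $\varphi(T^m(x)) \geq 1/2$. For $y$ near $x$, each such relevant center moves continuously by continuity of $\varphi \circ T^m$, while any center appearing for $y$ but not for $x$ (i.e.\ with $\varphi(T^m(x)) = 0$ but $\varphi(T^m(y))$ small and positive) has last coordinate exceeding $2$, whence its Voronoi cell misses $\Real^d \times \{-H\}$ by the argument of (4). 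Hausdorff continuity of $W_H(\cdot, n)$ then reduces to the elementary fact that a Voronoi tiling varies continuously in its finitely many bounded defining centers. The main obstacle is precisely this last reduction, since bisectors can sweep across the plane $\Real^d \times \{-H\}$ as $x$ varies; however, the uniform separation established in (3) keeps all centers apart by at least $M$, preventing genuine degeneracy and making the continuity claim routine.
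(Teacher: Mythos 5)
The paper offers no proof of this lemma at all --- it is quoted as Lemma~4.1 of \cite{GLT-Zk} with the remark that the proofs can be found there --- so there is no in-paper argument to measure yours against; I can only judge the proposal on its own terms, and it is correct. Items (2) and (3) are the expected relabelling and bisector arguments. For (4) and (5), producing for each $a\in\Real^d$ a competitor center $(m^*,1)$ with $\norm{a-m^*}\leq L+\sqrt{d}/2$ is exactly the right way to exploit $H>(L+\sqrt{d})^2$, and the two inequalities you extract from $\norm{(a,-H)-(n,t)}\leq\norm{(a,-H)-(m^*,1)}$ do yield $\norm{a-n}\leq L+\sqrt{d}/2<L+\sqrt{d}$ and $t<2$; the arithmetic $(L+\sqrt{d}/2)^2+(H+1)^2<H+(H+1)^2\leq(H+2)^2$ checks out. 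Item (1) is the only place where you sketch rather than prove, and two points deserve to be made explicit: first, the quantitative bound showing that centers with $\norm{m-n}$ beyond a constant depending on $L$, $d$ and $H$ impose no active constraint on the hyperplane near $B_{L+\sqrt{d}}(n)\times\{-H\}$ (compare squared distances as in your proof of (5)); second, the observation that a constraint coming from a center with last coordinate at least $2$ can never be \emph{active} on the hyperplane --- if it were, that center's own cell would meet $\Real^d\times\{-H\}$, contradicting what you proved in (4) --- which is what actually disposes of the centers that appear or disappear as $y$ varies near $x$. With those spelled out, the reduction to Hausdorff continuity of a bounded intersection of finitely many continuously varying half-spaces with nonempty interior is indeed routine, and the separation from (3) guarantees the bisectors are honest hyperplanes meeting $\Real^d\times\{-H\}$ transversally.
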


Moreover, if one considers different horizontal cuts (at levels $-sH$ and $-H$ for some $s>1$), one has the following lemma.
\begin{lem}[Lemma 4.1(4) of \cite{GLT-Zk} and its proof]\label{cut-down-large}
Let $s>1$ and $r>0$. One can choose $M$ sufficiently large such that if $(a, -sH) \in V(x, n)$, then $$B_r(\frac{a}{s} + (1 - \frac{1}{s})n) \subseteq W_H(x, n)$$
and
$$\norm{\frac{a}{s} + (1 - \frac{1}{s})n - (a + \frac{(s-1)H}{sH+t}(n-a))} \leq \frac{4}{L+\sqrt{d}},$$
where $t= \frac{1}{\varphi(T^n(x))}$ and $\norm{\cdot}$ is the $\ell^2$-norm on $\Real^d$.
\end{lem}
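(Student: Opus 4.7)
The plan is to dispense with the second (displacement) bound by a direct algebraic computation, and then to deduce the first (ball inclusion) from it using the convexity of Voronoi cells and Lemma \ref{tiling-properties}(3).

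For the displacement bound, I would collect terms and simplify:
$$a+\tfrac{(s-1)H}{sH+t}(n-a)-\Bigl(\tfrac{a}{s}+(1-\tfrac{1}{s})n\Bigr)=(n-a)\Bigl[\tfrac{(s-1)H}{sH+t}-\tfrac{s-1}{s}\Bigr]=-(n-a)\tfrac{(s-1)t}{s(sH+t)}.$$
To estimate the norm I need $t\in[1,2]$ and $\norm{n-a}\leq L+\sqrt{d}$ for $(a,-sH)\in V(x,n)$. Both are analogues of Lemma \ref{tiling-properties}(4),(5) at level $-sH$ rather than $-H$ and follow by the same argument: pick a lattice point $m\in\Int^d$ with $\norm{m-a}\leq L+\sqrt{d}$ and $\varphi(T^m(x))=1$ (using property (2) of $L$), then use $\norm{(a,-sH)-(n,t)}\leq\norm{(a,-sH)-(m,1)}$ together with $t\geq 1$. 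Combined with $H>(L+\sqrt{d})^2$, this yields the upper bound $\frac{2(s-1)(L+\sqrt{d})}{s^2 H}\leq\frac{4}{L+\sqrt{d}}$.

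For the ball inclusion, I would exploit convexity of $V(x,n)$. By Lemma \ref{tiling-properties}(3), $B_{M/2}(n,t)\subseteq V(x,n)$, so the convex hull $C$ of $B_{M/2}(n,t)\cup\{(a,-sH)\}$ lies in $V(x,n)$ too. Cutting $C$ by the hyperplane $\Real^d\times\{-H\}$ and restricting to the equatorial slice $\{(q,t):\norm{q-n}\leq M/2\}$, each such $q$ contributes the point $a+\tfrac{(s-1)H}{sH+t}(q-a)$ at level $-H$. As $q$ ranges, this traces out a ball of radius $\tfrac{(s-1)HM}{2(sH+t)}\geq\tfrac{(s-1)M}{2(s+1)}$ centered at $a+\tfrac{(s-1)H}{sH+t}(n-a)$. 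For $M$ large enough (depending on $r$ and $s$) this radius exceeds $r+\tfrac{4}{L+\sqrt{d}}$; shifting the center by the displacement estimate of the first step then gives $B_r(\tfrac{a}{s}+(1-\tfrac{1}{s})n)\subseteq W_H(x,n)$.

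The only nonroutine input is promoting the bounds $t\leq 2$ and $\norm{a-n}\leq L+\sqrt{d}$ from level $-H$ to level $-sH$; once that is in place the argument reduces to elementary Euclidean geometry and parallels the case $s\approx 1$ treated in Lemma 4.1 of \cite{GLT-Zk}. The main bookkeeping point is keeping track of the fact that choosing $M$ large simultaneously forces $L$ and $H$ to be large, so that the error $\tfrac{4}{L+\sqrt{d}}$ contributed by the second inequality is absorbed comfortably by the cone cross-section radius.
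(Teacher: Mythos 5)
Your proof is correct. The paper itself gives no argument for this lemma (it is quoted verbatim from Lemma 4.1(4) of \cite{GLT-Zk} together with its proof there), and your reconstruction follows essentially the same route as that source: the algebraic identity $\frac{a}{s}+(1-\frac{1}{s})n-(a+\frac{(s-1)H}{sH+t}(n-a))=(n-a)\frac{(s-1)t}{s(sH+t)}$ combined with the level-$(-sH)$ analogues of Lemma \ref{tiling-properties}(4)--(5) for the displacement bound, and the cone over the inscribed ball $B_{M/2}(n,t)$ sliced at height $-H$ for the ball inclusion, with the two estimates correctly chained via the triangle inequality and $H>(L+\sqrt{d})^2$, $M\leq L$.
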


\begin{defn}
Note that the point $(a + \frac{(s-1)H}{sH+t}(n-a), -H)$ is the image of $(a, -sH)$ in the plane $\Real^d \times \{-H\}$ under the projection towards the center $(n, t)$. Let us call $a + \frac{(s-1)H}{sH+t}(n-a)$ the $H$-projective image of $a$ (with the center $(n, t)$). 
\end{defn}

The following is a lemma on convex bodies in $\Real^d$, and the author is in debt to Tyrrell McAllister for the  discussions.
\begin{lem}\label{convex-small-bd-N-Z}
Consider $\Real^d$. For any $\eps>0$ and any $r>0$, there is $N_0>0$ such that if $N\geq N_0$, then for any convex body $V \subseteq \Real^d$, one has
$$\frac{1}{N^d}\abs{\{n\in \Int^d: \mathrm{dist}(n, \partial V) \leq r,\ n\in I_N \}} < \eps,$$
where $I_N = [0, N]^d$.
\end{lem}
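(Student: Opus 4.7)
\noindent\emph{Proof plan.}
The plan is to bound the lattice-point count by a Lebesgue volume and then estimate that volume via classical convex geometry. The uniformity over all convex bodies $V$ ultimately rests on monotonicity of surface area for nested convex bodies: any convex body in $\Real^d$ contained in a cube of side $L$ has surface area at most $2dL^{d-1}$.

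First set $s = r + \sqrt{d}$. If $n \in \Int^d \cap I_N$ satisfies $\mathrm{dist}(n, \partial V) \leq r$, then the closed unit cube $n + [0,1]^d$ lies in $[0, N+1]^d$ and every point of it is within Euclidean distance $s$ of $\partial V$; since distinct such unit cubes have disjoint interiors,
\[\abs{\set{n \in \Int^d \cap I_N : \mathrm{dist}(n, \partial V) \leq r}} \leq \mathrm{vol}\left(\set{x \in [0, N+1]^d : \mathrm{dist}(x, \partial V) \leq s}\right).\]
Now I would replace $V$ by the convex body $V' = V \cap [-2s, N+1+2s]^d$: any $y \in \partial V$ within distance $s$ of $[0, N+1]^d$ lies in the open interior of this enlarged cube, so $y \in \partial V'$, and hence the right-hand side above is dominated by $\mathrm{vol}\set{x : \mathrm{dist}(x, \partial V') \leq s}$. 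This last set coincides with $(V' + B(0,s)) \setminus (V' \ominus B(0,s))$, with the convention $V' \ominus B(0,s) = \varnothing$ when the inradius of $V'$ is smaller than $s$. For any convex body $K$ one has the standard tube estimates
\[\mathrm{vol}(K + B(0,s)) - \mathrm{vol}(K) \leq s \cdot \mathrm{surf}(K + B(0,s)) \quad\text{and}\quad \mathrm{vol}(K) - \mathrm{vol}(K \ominus B(0,s)) \leq s \cdot \mathrm{surf}(K),\]
the second coming from $\mathrm{vol}(K) = \int_0^{r_0(K)} \mathrm{surf}(K \ominus B(0,t))\, dt$, where $r_0$ denotes the inradius.

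Since $V'$ and $V' + B(0,s)$ both sit inside a cube of side $N + C$ with $C = C(d,r)$, surface-area monotonicity yields $\mathrm{surf}(V'), \mathrm{surf}(V' + B(0,s)) \leq 2d(N+C)^{d-1}$, producing a bound of order $(r + \sqrt{d}) \cdot N^{d-1}$ on $\mathrm{vol}\set{x : \mathrm{dist}(x, \partial V') \leq s}$. Dividing by $N^d$ gives a quantity of order $(r + \sqrt{d})/N$, which drops below $\eps$ once $N \geq N_0$ for an appropriate $N_0 = N_0(d, r, \eps)$. The main technical hurdle is ensuring uniformity in $V$; surface-area monotonicity for nested convex bodies is precisely what delivers this, after which the argument is a routine application of Steiner-type tube formulas.
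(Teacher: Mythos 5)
Your proposal is correct and follows essentially the same route as the paper: bound the lattice count by the volume of a tube around $\partial V$, localize $V$ to a cube comparable to $I_N$, and exploit monotonicity of quermassintegrals over nested convex bodies to get a uniform $O(N^{d-1})$ bound. The only cosmetic difference is that you work with the surface area ($W_1$) and first-order inner/outer tube estimates where the paper invokes the full Steiner formula, monotonicity of all the $W_j$, and a factor-of-two comparison between the two-sided and outer neighbourhoods of the boundary.
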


\begin{proof}
Pick $N_0$ sufficiently large such that 
$$2 \frac{\mathrm{vol}(\partial_{r+\sqrt{d}}(I_N))}{\mathrm{vol}(I_N)} < \eps,\quad N>N_0,$$ where $\partial_E(K)$ denotes the $E$-neighbourhood of the boundary of a convex body $K$.  Then, this $N_0$ satisfies the conclusion of the Lemma.

Indeed, for any $N\geq N_0$, denote by $\partial^+_{r+\sqrt{d}}(V \cap I_N)$ the outer $(r+\sqrt{d})$-neighborhood of the convex body  $V \cap I_N$, and it follows from Steiner formula (see, for instance, (4.1.1) of \cite{Schneider-book}) that
$$\mathrm{vol}(\partial^+_{r+\sqrt{d}}(V \cap I_N)) =  \sum_{j=1}^d C_d^j W_j(V\cap I_N)(r+\sqrt{d})^j,$$
where $W_j(V \cap I_N)$ is the $j$-th quermassintegral of $V\cap I_N$. Since the quermassintegrals $W_j$, $j=1, ..., d$, are monotonic (see, for instance, Page 211 of \cite{Schneider-book}), one has $$W_j(V \cap I_N) \leq W_j(I_N),\quad j=1, 2, ..., d,$$ and hence 
\begin{eqnarray*}
\mathrm{vol}(\partial^+_{r+\sqrt{d}}(V \cap I_N)) & = & \sum_{j=1}^d C_d^j W_j(V\cap I_N)(r+\sqrt{d})^j \\ 
& \leq & \sum_{j=1}^d C_d^j W_j(I_N)(r+\sqrt{d})^j \\
& = & \mathrm{vol}(\partial^+_{r + \sqrt{d}}(I_N)).
\end{eqnarray*}
Since $\mathrm{vol}(\partial_{r+\sqrt{d}}(V\cap I_N)) \leq 2\mathrm{vol}(\partial^+_{r+\sqrt{d}}(V\cap I_N))$, one has
$$\frac{\mathrm{vol}(\partial_{r+\sqrt{d}}(V\cap I_N))}{\mathrm{vol}(I_N)} \leq 2\frac{\mathrm{vol}(\partial^+_{r+\sqrt{d}}(V\cap I_N))}{\mathrm{vol}(I_N)} \leq 2 \frac{\mathrm{vol}(\partial_{r+\sqrt{d}}(I_N))}{\mathrm{vol}(I_N)} < \eps. $$
On the other hand, note that
$$  \abs{\{n\in \Int^d: \mathrm{dist}(n, \partial V) \leq r,\ n\in I_N \}} \leq \mathrm{vol}(\partial_{r+\sqrt{d}}(V\cap I_N)), $$ and hence
$$\frac{1}{N^d}\abs{\{n\in \Int^d: \mathrm{dist}(n, \partial V) \leq r,\ n\in I_N \}} \leq \frac{\mathrm{vol}(\partial_{r+\sqrt{d}}(V\cap I_N))}{\mathrm{vol}(I_N)} < \eps,$$
as desired.
\end{proof}

\begin{defn}
Consider a continuous function $X \ni x \mapsto \mathcal W(x)$ with $\mathcal W(x)$ a $\Real^d$-tiling. For each $R \geq 0$, a point $x\in X$ is said to be an $R$-interior point if $\mathrm{dist}(0, \partial \mathcal W(x)) > R$, where $\partial \mathcal W(x)$ denotes the union of the boundaries of the tiles of $\mathcal W$. Note that, in this case, the origin $0\in\Real^d$ is an interior point of a (unique) tile of $\mathcal W(x)$. Denote this tile by $\mathcal W(x)_0$, and denote the set of $R$-interior points by $\iota_R(\mathcal T)$.

Otherwise (if $\mathrm{dist}(0, \partial \mathcal W(x)) \leq R$), the point $x$ is said to be an $R$-boundary point. Denote by $\beta_R(\mathcal T)$ the set of $R$-boundary points. 

Note that $\beta_R(\mathcal T)$ is closed and $\iota_R(\mathcal T)$ is open.
\end{defn}

\begin{lem}\label{pre-two-towers}
Let $(X, T, \Int^d)$ be a minimal free dynamical system.

Fix $s \in (1, 2)$. Let $R_0> 0$ and $\eps>0$ be arbitrary. Let $N>N_0$, where $N_0$ the constant of Lemma \ref{convex-small-bd-N-Z} with respect to $\eps$ and $2R_0+4+\sqrt{d}/2$, and let $R_1 > \max\{R_0, N\sqrt{d}\}$. 

Then  $M$ can be chosen large enough such that there exist a finite open cover
$$U_1 \cup U_2 \cup \cdots \cup U_K \supseteq \beta_{R_0}(\mathcal W_{sH}),$$ and $n_1, n_2, ..., n_K \in \Int^d$ such that
\begin{enumerate}
\item\label{insert-prop-1} $T^{n_i}(U_i) \subseteq \iota_{R_1}(\mathcal W_H) \subseteq \iota_0(\mathcal W_H)$, $i=1, 2, ..., K$,
\item\label{insert-prop-2} the open sets $$T^{n_i}(U_i),\quad i=1, 2, ..., K,$$ can be grouped as 
$$
\left\{  
\begin{array}{l}
T^{n_{1}}(U_{1}), ..., T^{n_{s_1}}(U_{s_1}), \\
T^{n_{s_1+1}}(U_{s_1+1}), ..., T^{n_{s_2}}(U_{s_2}), \\
\cdots \\
T^{n_{s_{m-1}+1}}(U_{s_{m-1}+1}), ..., T^{n_{s_m}}(U_{s_m}),
\end{array}
\right.
$$
with $m \leq (\lfloor2\sqrt{d}\rfloor + 1)^d$, such that the open sets in each group are mutually disjoint, 
\item\label{insert-prop-3} for each $x\in \iota_0(\mathcal W_H)$ and each $c\in \mathrm{int}(\mathcal W_H(x)_0) \cap \Int^d$ with $\mathrm{dist}(c, \partial\mathcal W_H)>N\sqrt{d}$, one has
         $$\frac{1}{N^d} \abs{\left\{n \in \{0, 1, ..., N-1\}^d: T^{c+n}(x) \in \bigcup_{i=1}^K T^{n_i}(U_i)\right\}} < \eps.$$         
\end{enumerate}
\end{lem}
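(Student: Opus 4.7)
The plan is to build the cover $\{(U_i, n_i)\}$ pointwise via Lemma \ref{cut-down-large}, then consolidate sets sharing a common shift, and finally verify (1)--(3) using $\Int^d$-equivariance and a projective-geometric analysis of the covering multiplicity. The genuinely subtle point is (2), where one must identify the mechanism that produces the universal constant $(\lfloor 2\sqrt d\rfloor + 1)^d$.

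I would first fix $r := R_1 + \sqrt d/2 + \eta$ with $\eta > 0$ small enough that $2\sqrt d + 2s\eta < \lfloor 2\sqrt d\rfloor + 1$, and choose $M$ large so Lemma \ref{cut-down-large} gives $B_r((1-\tfrac{1}{s})n)\subseteq W_H(x,n)$ whenever $(0,-sH)\in V(x,n)$. For each $x_0 \in \beta_{R_0}(\mathcal W_{sH})$, pick an index $n_{x_0}$ with $(0,-sH)\in V(x_0,n_{x_0})$; a Voronoi comparison against an anchor centre $(n,1)$ supplied by property (2) of the construction of $(M,L)$ yields $|n_{x_0}|\leq L$. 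Then $B_{r - R_1}((1-\tfrac{1}{s})n_{x_0})$ has radius $>\sqrt d/2$ and contains an integer $n^{x_0}$, which is $R_1$-interior to $W_H(x_0, n_{x_0})$. By Lemma \ref{tiling-properties}(1), a small neighbourhood $U_{x_0}$ preserves this conclusion, and I shrink it further so $U_{x_0}\subseteq \beta_{R_0+1}(\mathcal W_{sH})$. Compactness extracts finitely many $U_{x_0^{(1)}},\ldots,U_{x_0^{(J)}}$; I then collapse those sharing a common value of $n^{x_0^{(j)}}$ into a single open set and relabel the resulting pairs as $(U_1,n_1),\ldots,(U_K,n_K)$. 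Property (1) is then immediate from the construction combined with equivariance (Lemma \ref{tiling-properties}(2)).

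For property (2), suppose $y \in T^{n_i}(U_i) \cap T^{n_j}(U_j)$. The index of the unique tile of $\mathcal W_H(y)$ containing the origin is a single integer $m^*$ determined by $y$, and tracing the construction through equivariance forces $\|n_i - (s-1)m^*\| \leq s(r-R_1)$ for every admissible $n_i$. With $s<2$ and the choice of $\eta$, the $\ell^2$-ball $B_{s(r-R_1)}((s-1)m^*)$ has diameter strictly less than $\lfloor 2\sqrt d\rfloor + 1$, so any two admissible shifts $n_i \ne n_j$ for the same $y$ satisfy $|n_i - n_j|_\infty < \lfloor 2\sqrt d\rfloor + 1$. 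Colouring the index set by $n_i$ modulo $(\lfloor 2\sqrt d\rfloor + 1)\Int^d$ coordinate-wise therefore separates distinct admissible shifts at any common point, giving the partition into at most $(\lfloor 2\sqrt d\rfloor + 1)^d$ groups of mutually disjoint sets.

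For property (3), the condition $T^{c+n}(x) \in T^{n_i}(U_i)$ unwinds via $U_i \subseteq \beta_{R_0+1}(\mathcal W_{sH})$ and equivariance to $c + n - n_i \in \partial_{R_0+1}(\mathcal W_{sH}(x))$. Since $c$ is $N\sqrt d$-deep in $\mathcal W_H(x)_0$ and $\|n_i\|$ is uniformly bounded, the translated cubes $c - n_i + \{0,\ldots,N-1\}^d$ sit in a bounded region meeting only finitely many tiles of $\mathcal W_{sH}(x)$. Applying Lemma \ref{convex-small-bd-N-Z} with parameter $2R_0 + 4 + \sqrt d/2$ (absorbing the $R_0+1$-neighbourhood, the translation slack, and the half-cube rounding $\sqrt d/2$) to each such tile, and summing, delivers the density bound $<\eps$. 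The main difficulty in the whole argument remains the numerical coincidence in (2): the estimate $2\sqrt d + 2s\eta < \lfloor 2\sqrt d\rfloor + 1$ is precisely what rules out the possibility that the multiplicity grows with $L$ or $R_0$, and it is the feature that distinguishes the present $\Int^d$-analysis from the case $d=1$ treated in \cite{Niu-MD-Z}.
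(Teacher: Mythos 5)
Your construction of the cover and the shifts, and your verification of properties (1) and (2), are essentially sound and run parallel to the paper's own proof: the paper likewise anchors each shift near the $H$-projective image attached to the cell containing the origin at level $-sH$, derives $n_i-h_{n_i}=n_j-h_{n_j}$ from the fact that $B_{R_1}(0)$ can lie in only one tile of $\mathcal W_H$, and obtains $\norm{n_i-n_j}<2\sqrt d$ before colouring coordinatewise modulo $\lfloor 2\sqrt d\rfloor+1$; your $\eta$-bookkeeping is a harmless variant, and the constant $(\lfloor 2\sqrt d\rfloor+1)^d$ arises the same way in both arguments. Contrary to your closing remark, though, (2) is the routine part; the real content of the lemma is in (3), and that is where your argument breaks.

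In property (3) you reduce $T^{c+n}(x)\in T^{n_i}(U_i)$ to $c+n-n_i\in\partial_{R_0+1}(\mathcal W_{sH}(x))$ and then propose to apply Lemma \ref{convex-small-bd-N-Z} to each tile met by each translated cube and sum. This union bound does not close: the sum runs over all $K$ shifts and over all tiles meeting each translate, giving at best $Kk\eps$, and neither factor is controlled --- $K$ is of order $L^d$ (the subcover is indexed by cells with $\norm{n}<L+\sqrt d+2R_0$), and $\norm{n_i}$ is of order $(1-\tfrac1s)L$, so the ``translation slack'' cannot be absorbed into the parameter $2R_0+4+\sqrt d/2$, which must be fixed before $M$ and $L$ are chosen. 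The mechanism you are missing is the paper's rigidity step: since $T^{n_i}(U_i)\subseteq\iota_{R_1}(\mathcal W_H)$ and $c+n\in\mathrm{int}(\mathcal W_H(x)_0)$, the inclusion $B_{R_1}(0)\subseteq W_H(T^{c+n}(x),\cdot)$ forces the witnessing cell to be exactly $V(x,n(x))$, the one containing the entire cube $c+\{0,\dots,N-1\}^d$; and because each shift is chosen within the uniform distance $2R_0+4+\sqrt d/2$ (independent of $L$) of the $H$-projective image of the boundary of its own $-sH$ tile, this identification converts the membership condition into: $c+n$ lies in the $(2R_0+4+\sqrt d/2)$-neighbourhood of the $H$-projective image of $\partial W_{sH}(x,n(x))$ --- the boundary of a single convex body, to which Lemma \ref{convex-small-bd-N-Z} is applied exactly once, with no summation over $i$ or over tiles. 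Your construction actually contains the ingredients needed for this (your property-(2) analysis already extracts the forced tile index $m^*$, and your shifts are anchored near projective images of near-boundary points), but the argument you actually wrote for (3) is a different, lossy one and does not prove the stated bound.
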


\begin{proof}

By Lemma 4.1(4) of \cite{GLT-Zk} (see Lemma \ref{cut-down-large}), one can choose $U'\subseteq U$ and $\varphi$ such that $M$ is sufficiently large so that for a fixed $H>(L+\sqrt{d})^2$, if $(a, -sH) \in V(x, n)$ for some $a\in\Real^d$, then 
$$B_{R_1+2R_0 + 1+ \frac{\sqrt{d}}{2}}(\frac{a}{s} + (1-\frac{1}{s})n) \times \{-H\} \in V(x, n)$$  
and 
\begin{equation}\label{close-to-bd-1}
\norm{\frac{a}{s}+(1-\frac{1}{s})n - (a+  \frac{(s-1)H}{sH+t}(n-a))} \leq\frac{4}{L+\sqrt{d}} < 4,
\end{equation}
where $t = \frac{1}{\varphi(T^n(x))}$, and $a+  \frac{(s-1)H}{sH+t}(n-a)$ is the $H$-projective image of $a$.


For each $n\in \Int^d$, define
$$U_n = \{x \in X: \mathrm{dist}(0, \partial W_{sH}(x, n)) < 2R_0,\ 
\mathrm{int}W_{sH}(x, n) \neq \varnothing\}.$$ Note that $U_n$ is open. For the same $n$, one also picks $h_n\in \Int^d$ such that 
\begin{equation}\label{pert-n}
\norm{(1-\frac{1}{s})n - h_n} \leq \frac{\sqrt{d}}{2}.
\end{equation}

For each $x \in U_n$, 
there is $a \in \partial W_{sH}(x, n)\subseteq \Real^d$ with $$\norm{a} < 2R_0. $$ By the choice of $M$ (hence $H$), one has
\begin{equation}\label{large-H}
 B_{R_1+2R_0 + 1+ \frac{\sqrt{d}}{2}}(\frac{a}{s} + (1-\frac{1}{s})n) \subseteq W_{H}(x, n).
\end{equation} 
Since
\begin{equation}\label{close-to-bd-2}
\norm{h_n - (\frac{a}{s} + (1-\frac{1}{s})n)} \leq \norm{\frac{a}{s}} + \norm{(1-\frac{1}{s})n - h_n} < 2R_0 +\frac{\sqrt{d}}{2},
\end{equation} 
by \eqref{large-H}, one has
$$B_{R_1+1}(h_n) \subseteq W_H(x, n),$$ which implies
\begin{equation}\label{contain-R-ball}
B_{R_1}(0) \subset B_{R_1+1}(0) \subseteq - h_n +  W_H(x, n) = W_H( T^{h_n}(x), n  - h_n).
\end{equation}
In particular, $T^{h_n}(x) \in \iota_{R_1}(\mathcal W_H)$, which implies $$T^{h_n}(U_n) \subseteq \iota_{R_1}(\mathcal W_H),$$ and this shows Property (\ref{insert-prop-1}).

Note that by \eqref{close-to-bd-1} and \eqref{close-to-bd-2},
\begin{equation}\label{image-nbhd}
\norm{h_n - (a+\frac{(s-1)H}{sH+t}(n-a))} < 2R_0+4+\frac{\sqrt{d}}{2}.
\end{equation}
Since $a \in \partial W_{sH}(x, n)$, this implies that $h_n$ is in the $(2R_0+4+\frac{\sqrt{d}}{2})$-neighbourhood of the the $H$-projective image of $\partial W_{sH}(x, n)$ (with respect to $(n, t)$).

On the other hand, if $x\in \beta_{R_0}(\mathcal W_{sH})$, then $\mathrm{dist}(0, \partial W_{sH}(x, n)) \leq R_0$ for some $n\in\Int^d$ with $\mathrm{int}(W_{sH}(x, n)) \neq \varnothing$, which implies that $x \in U_n$. Therefore, $\{U_n: n\in \Int^d\}$ form an open cover of $\beta_{R_0}(\mathcal W_{sH})$. Since $\beta_{R_0}(\mathcal W_{sH})$ is a compact set, there is a finite subcover
$$U_{n_1}, U_{n_2}, ..., U_{n_K}.$$
(In fact, $\{U_n: \norm{n}<L+\sqrt{d}+ 2R_0\}$ already covers $\beta_{R_0}(\mathcal W_{sH})$ by (5) of Lemma \ref{tiling-properties}.)

Assume that $n_i$ and $n_j$ satisfy $$T^{h_{n_i}} (U_{n_i}) \cap T^{h_{n_j}} (U_{n_j}) \neq \varnothing.$$ Then there are $x_i\in U_{n_i}$ and $x_j\in U_{n_j}$ with $$T^{h_{n_i}}(x_i) = T^{h_{n_j}}(x_j).$$
Since $x_i \in U_{n_i}$ and $x_j \in U_{n_j}$, by \eqref{contain-R-ball}, one has that 
$$B_R(0) \subseteq W_{H}(T^{h_{n_i}}(x_i), n_i-h_{n_i})$$
and
\begin{eqnarray*}
B_R(0) & \subseteq & W_{H}(T^{h_{n_j}}(x_j), n_j-h_{n_j})\\
 & = & W_H(T^{h_{n_i}}(x_i), n_j-h_{n_j}).
\end{eqnarray*}
Therefore, $n_i-h_{n_i} = n_j-h_{n_j}$, and
$$n_i - n_j = h_{n_i}-h_{n_j}.$$
Together with \eqref{pert-n}, one has
\begin{eqnarray*}
\norm{n_i - n_j} & = & \norm{h_{n_j} - h_{n_j}} \\
& \leq & (1-\frac{1}{s})\norm{n_i - n_j} +  \sqrt{d} \\
& < & \frac{1}{2}\norm{n_i - n_j} +  \sqrt{d},
\end{eqnarray*}
and hence $$\norm{n_i - n_j} < 2\sqrt{d}.$$
Note that the set $\Int^d$ can be divided into $(\lfloor2\sqrt{d}\rfloor+1)^d$ groups $(\Int^d)_1$, ..., $(\Int^d)_{(\lfloor2\sqrt{d}\rfloor + 1)^d}$ such that any pair of elements inside each group has distance at least $2\sqrt{d}$, and therefore $$T^{h_n}(U_n) \cap T^{h_{n'}}(U_{n'}) = \varnothing,\quad n, n' \in (\Int^d)_m,\ m=1, ..., (\lfloor2\sqrt{d}\rfloor + 1)^d.$$ Then group $U_{n_1}, ..., U_{n_K}$ as
$$\{U_{n_i}: i=1, ..., K,\ n_i\in (\Int^d)_1\}, ..., \{U_{n_i}: i=1, ..., K,\ n_i\in (\Int^d)_{(\lfloor2\sqrt{d}\rfloor + 1)^d}\},$$ and this shows Property (\ref{insert-prop-2}).

Let $x\in \iota_0(\mathcal W_H)$ (so that $\mathcal W_H(x)_0$ is well defined). Write $$\mathcal W_H(x)_0 = W_H(x, n(x)) = V(x, n(x)) \cap (\Real^d \times \{-H\} ),\quad\textrm{where $n(x)\in \Int^d$}.$$

Assume there is $m \in \mathrm{int}(\mathcal W_H(x)_0) \cap \Int^d$ such that 
\begin{equation}\label{intersection-assumption}
T^m(x) \in T^{h_{n_k}}(U_{n_k})
\end{equation} 
for some $n_k$.

Since $m \in \mathrm{int}(\mathcal W_H(x)_0) \cap \Int^d$, one has that $$0 \in \mathrm{int}(-m+W_H(x, n(x))) = \mathrm{int}W_{H}(T^m(x), n(x) - m).$$ Hence $T^m(x) \in \iota_0(\mathcal W_H)$ and
\begin{equation}\label{pre-same-lower-level}
\mathcal W_{H}(T^m(x))_0 = W_H(T^m(x), n(x)-m).
\end{equation}

By the assumption \eqref{intersection-assumption}, there is $x_{n_k} \in U_{n_k}$ such that $$T^m(x) = T^{h_{n_k}}(x_{n_k}).$$ Then, with \eqref{contain-R-ball}, one has 
$$B_{R_1}(0) \subseteq W_H(T^{h_{n_k}}(x_{n_k}), n_k-h_{n_k}) = W_H(T^m(x), n_k-h_{n_k}),$$ and therefore (with \eqref{pre-same-lower-level}), 
$$W_{H}(T^m(x), n(x) - m) = W_H( T^{h_{n_k}}(x_{n_k}), n_k-h_{n_k})$$ and
$$V(T^m(x), n(x) - m) = V( T^{h_{n_k}}(x_{n_k}), n_k-h_{n_k}).$$
Hence, at the $-sH$ level, one also has
\begin{equation}\label{same-lower-level}
W_{sH}(T^m(x), n(x) - m) = W_{sH}( T^{h_{n_k}}(x_{n_k}), n_k-h_{n_k}) = -h_{n_k} + W_{sH}(x_{n_k}, n_k) .
\end{equation}

By \eqref{image-nbhd}, $h_{n_k}$ is in the $(2R_0+4+\sqrt{d}/2)$-neighbourhood of the $H$-projective image of $\partial W_{sH}(x_{n_k}, n_k)$, and therefore $0$ is in the $(2R_0+4+\sqrt{d}/2)$-neighbourhood of the $H$-projective image of $$-h_{n_k}+\partial W_{sH}(x_{n_k}, n_k) = W_{sH}( T^{h_{n_k}}(x_{n_k}), n_k-h_{n_k}) $$ Thus, by \eqref{same-lower-level},
the origin $0$ is in the $(2R_0+4+\sqrt{d}/2)$-neighbourhood of the $H$-projective image of $\partial W_{sH}(T^m(x), n(x)-m)$, and hence $m$ is in the $(2R_0+4+\sqrt{d}/2)$-neighbourhood of the $H$-projective image of $\partial W_{sH}(x, n(x))$, which is denoted by $\partial W_{sH}^H(x, n(x))$.

Therefore, for any $c\in \mathrm{int}(\mathcal W_H(x)_0) \cap \Int^d$ with $\mathrm{dist}(c, \partial\mathcal W_H)>N\sqrt{d}$, since $$c+n \in \mathrm{int}(\mathcal W_H(x)_0),\quad n \in \{0, 1, ..., N-1\}^d,$$ one has 
\begin{eqnarray*} 
& & \left\{ n \in \{0, 1, ..., N-1\}^d: T^{c+n}(x) \in \bigcup_{i=1}^K h_i(U_i) \right\} \\
& \subseteq & \left\{n \in \{0, 1, ..., N-1\}^d: \mathrm{dist}(c+n, \partial W_{sH}^H(x, n(x))) < 2R_0+4+\sqrt{d}/2 \right\}.
\end{eqnarray*}
Hence, by the choice of $N$ and Lemma \ref{convex-small-bd-N-Z},
\begin{eqnarray*}
& & \frac{1}{N^d} \abs{\left\{n \in \{0, 1, ..., N-1\}^d: T^{c+n}(x) \in \bigcup_{i=1}^K h_i(U_i)\right\}} \\
& \leq & \frac{1}{N^d} \abs{\left\{n\in c+ \{0, 1, ..., N-1\}^d : \mathrm{dist}(n, \partial W_{sH}^H(x, n(x))) < 2R_0+4+\sqrt{d}/2\right\}} \\
& < & \eps.
\end{eqnarray*}
This proves Property (\ref{insert-prop-3}).
\end{proof}

\section{Two towers}

\subsection{Rokhlin towers} 
Let $x \mapsto \mathcal W(x) = \bigcup_{n\in \Int^d} W(x, n)$ be a map with $\mathcal W(x)$ a tiling of $\Real^d$ and $W(x, n)$ is the cell with label $n$. Assume that the map $x\mapsto \mathcal W(x)$ is continuous in the sense that for any $\eps>0$ and any $W(x, n)$ with non-empty interior, if $y\in X$ is sufficiently close to $x$ then the Hausdorff distance between $W(x, n)$ and $W(y, n)$ are smaller than $\eps$. One also assumes that the map $x \mapsto \mathcal W(x)$  is equivariant in the sense that
$$W(T^{-m}(x), n+m) = m + W(x, n),\quad x\in X,\  m, n\in\Int^d.$$ 

The tiling functions $\mathcal W_H$ and $\mathcal W_{sH}$ constructed in the previous section clearly satisfy the assumptions above. With a such tiling function, one actually can build a Rokhlin tower as the following:

Let $N\in \mathbb N$ be arbitrary. Put
$$\Omega=\{x \in X: \mathrm{dist}(0, \partial \mathcal W(x)) > N\sqrt{d}\ \ \mathrm{and}\ \ \textrm{$\mathcal W(x)_0 = W(x, n)$ for some $n = 0 \mod N$}\},$$ where by $n = 0 \mod N$, one means $n_i = 0 \mod N$, $i=1, 2, ..., d$, if $n=(n_1, n_2, ..., n_d) \in \Int^d$.  
Note that $\Omega$ is open.

Let $m \in \{0, 1, ..., N-1\}^d$. Pick arbitrary $x \in \Omega$ and consider $T^{-m}(x)$. Note that $0 \in W(x, n)$ for some $n = 0 \mod N$ and $\mathrm{dist}(0, \partial W(x, n)) > N\sqrt{d}$. Since $$W(T^{-m}(x), n+m) = m + W(x, n),$$ one has that 
$$0 \in \mathrm{int}W(T^{-m}(x), n+m)\quad \textrm{and}\quad n+m = m \mod N.$$ 
Hence
\begin{equation}\label{upper-set}
T^{-m}(\Omega) \subseteq \Omega_m'.
\end{equation}
where
$$\Omega_m':=\{x \in X: 0 \notin\partial\mathcal W(x)\ \mathrm{and}\ \mathcal W(x)_0 = W(x, n), \ n = m \mod N\}.$$
For the same reason, if one defines 
$$\Omega''_m:=\{x \in X: \mathrm{dist}(0, \partial \mathcal W(x)) > 2N\sqrt{d}\ \mathrm{and}\ \mathcal W(x)_0 = W(x, n), \ n = m \mod N\},$$ 
then
\begin{equation}\label{lower-set}
 \Omega''_m \subseteq T^{-m}(\Omega).
 \end{equation}

Since the sets $$\Omega_m',\quad m \in \{0, 1, ..., N-1\}^d,$$ are mutually disjoint, it follows from \eqref{upper-set} that
$$ T^{-m}(\Omega),\quad m \in \{0, 1, ..., N-1\}^d$$
are mutually disjoint. That is, it forms a Rokhlin tower for $(X, T, \Int^d)$.

On the other hand, by \eqref{lower-set} and the construction of $\Omega_m''$, one has
\begin{equation}\label{tower-subset}
 \bigsqcup_{m \in \{0, 1, ..., N-1\}^d} T^{-m}(\Omega)  \supseteq  \bigsqcup_{m \in \{0, 1, ..., N-1\}^d} \Omega''_m  
 =\{x \in X: \mathrm{dist}(0, \partial\mathcal W(x)) > 2N\sqrt{d}\}.
\end{equation}
In particular, one has
\begin{equation}\label{ocap-control} 
\mathrm{ocap}\left(X \setminus  \bigsqcup_{m \in \{0, 1, ..., N-1\}^d} T^{-m}(\Omega)\right) \leq \mathrm{ocap}(\{x \in X: \mathrm{dist}(0, \partial\mathcal W(x)) \leq 2N\sqrt{d}\}).
\end{equation}

\begin{lem}\label{ocap-control-lem}
For any $E>0$, one has
$$\mathrm{ocap}(\{x \in X: \mathrm{dist}(0, \partial\mathcal W(x)) \leq E \}) \leq \limsup_{R \to \infty} \frac{1}{\mathrm{vol}(B_R)}\sup_{x\in X} \mathrm{vol}(\partial_E \mathcal W(x)\cap B_R),$$
where $\partial_E\mathcal W(x) = \{\xi\in\Real^d: \mathrm{dist}(\xi, \partial W(x)) \leq E\}$.
\end{lem}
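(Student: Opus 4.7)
The plan is to express the left-hand side as a lattice-point count inside $\partial_E\mathcal W(x)$ via equivariance of the tiling, to bound the count by a Lebesgue volume, and then to convert cubes to balls.

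Using the equivariance $\mathcal W(T^{-m}(x))=m+\mathcal W(x)$, for any $n\in\Int^d$ one has $T^n(x)\in\{y:\mathrm{dist}(0,\partial\mathcal W(y))\leq E\}$ if and only if $n\in\partial_E\mathcal W(x)$. Hence
$$\sum_{n\in\{0,\dots,N-1\}^d}\chi_{\{y:\mathrm{dist}(0,\partial\mathcal W(y))\leq E\}}(T^n(x)) \;=\; \abs{\partial_E\mathcal W(x)\cap\{0,\dots,N-1\}^d}.$$
Next, I would assign to each lattice point $n\in\partial_E\mathcal W(x)$ its integer-lattice Voronoi cube $n+[-1/2,1/2]^d$: these cubes are disjoint, each has unit volume, and by the triangle inequality each one lies in $\partial_{E+\sqrt{d}/2}\mathcal W(x)$; their union sits inside $[-1/2,N-1/2]^d$. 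Therefore
$$\abs{\partial_E\mathcal W(x)\cap\{0,\dots,N-1\}^d} \;\leq\; \mathrm{vol}\bigl(\partial_{E+\sqrt{d}/2}\mathcal W(x)\cap[-1/2,N-1/2]^d\bigr).$$

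The next step is to replace the cube by a ball. Using the equivariance once more to translate $x$ by an integer $m_N\in\Int^d$ close to $((N-1)/2,\dots,(N-1)/2)$, one brings the cube $[-1/2,N-1/2]^d$ to one (approximately) centred at the origin, which is contained in a ball $B_{R_N}$ with $R_N\sim\sqrt{d}\,N/2$. Taking the supremum over $y\in X$ on the right, dividing by $N^d$, and letting $N\to\infty$ then produces an upper bound for the orbit capacity of the form of the right-hand side, with the ball-average of $\mathrm{vol}(\partial_{E+\sqrt{d}/2}\mathcal W(\,\cdot\,)\cap B_R)$.

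To recover the clean form in the statement of the lemma—no dimensional constant and $\partial_E$ rather than $\partial_{E+\sqrt{d}/2}$ on the right—I would first observe that for a $\Int^d$-action the orbit capacity can equivalently be computed along any F\o lner sequence, in particular along the balls $B_R\cap\Int^d$ instead of the cubes $\{0,\dots,N-1\}^d$. With ball F\o lner sets on both sides, the cube-to-ball enclosure is trivial (no parasitic constant $\omega_d(\sqrt{d}/2)^d$), and the lattice-to-volume comparison becomes asymptotically sharp; the residual $\sqrt{d}/2$ thickening of $E$ is absorbed using $\mathrm{vol}(B_{R+c})/\mathrm{vol}(B_R)\to 1$ as $R\to\infty$. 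The main technical obstacle I anticipate is precisely this bookkeeping across the two conversions—lattice-count to volume, and cube to ball—so that all constants collapse to $1$ in the limit and $E$ is recovered on the right-hand side.
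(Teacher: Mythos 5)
Your overall strategy is the same as the paper's: use equivariance to turn the orbit count over $\{0,\dots,N-1\}^d$ into a count of lattice points of $\partial_E\mathcal W(x)$ in a large window, and then compare that lattice count to a volume. Your first two steps are correct, and in fact more careful than the paper's own proof, which passes from the lattice count to $\frac{1}{\mathrm{vol}(B_R)}\mathrm{vol}(\partial_E\mathcal W(x)\cap B_R)$ with an unexplained ``$\approx$''. Your passage from cubes to balls via the F\o lner-independence of the orbit capacity (Ornstein--Weiss) is also legitimate and does dispose of the parasitic constant $\omega_d(\sqrt d/2)^d$.

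The genuine gap is the last step: the claim that the residual thickening from $E$ to $E+\sqrt d/2$ is absorbed because $\mathrm{vol}(B_{R+c})/\mathrm{vol}(B_R)\to1$. That ratio controls an enlargement of the \emph{ball} $B_R$, which is a boundary effect and does vanish as $R\to\infty$; but what you need to undo is a thickening of the \emph{neighborhood of the tiling boundary}, which is spread uniformly throughout $B_R$ and is not a boundary effect. Concretely, for a tiling of $\Real$ by intervals of length $\ell$ one has $\mathrm{vol}(\partial_{E+1/2}\mathcal W(x)\cap B_R)\approx\frac{2E+1}{2E}\,\mathrm{vol}(\partial_{E}\mathcal W(x)\cap B_R)$ uniformly in $R$, so the two limsups differ by the fixed factor $(2E+1)/(2E)$ and no enlargement of $R$ closes that gap. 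Worse, a lattice point within distance $E<1/2$ of a tile endpoint contributes $1$ to the count but only $2E<1$ to the volume of $\partial_E\mathcal W(x)$, so the inequality you are aiming for, with $\partial_E$ on the right, can genuinely fail at the level of densities; the honest output of your (correct) computation is the lemma with $\partial_{E+\sqrt d/2}$ on the right-hand side. This weaker statement is all that is needed downstream: in Theorem \ref{URP} the right-hand side is made small by Lemma 4.2 of \cite{GLT-Zk}, which applies to any fixed thickening, so one simply invokes it with $E+\sqrt d/2$ (i.e.\ $2N\sqrt d+\sqrt d/2$) in place of $E$. Note that the paper's own proof silently elides exactly the same point at its ``$\approx$'' step, so your bookkeeping has in fact located a small imprecision in the statement rather than introduced one.
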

\begin{proof}
Pick an arbitrary $x \in X$ and an arbitrary positive number $R$, and consider the partial orbit $$T^{m}(x),\quad \norm{m} < R.$$ Note that if $\mathrm{dist}(0, \partial\mathcal W(T^m(x))) \leq E $ (i.e., $0 \in \partial_E\mathcal W(T^m(x))$) for some $m$, then
$$-m \in \partial_E\mathcal W(x).$$ Therefore
$$\{\norm{m} < R: 0 \in \partial_E\mathcal W(T^m(x))\} \subseteq \{\norm{m} < R: m \in \partial_E\mathcal W(x)\}.$$
As $N \to \infty$, one has
\begin{eqnarray*}
&& \frac{1}{\abs{B_R \cap \Int^d}} \abs{\{\norm{m} < R: 0 \in \partial_E\mathcal W(T^m(x))\} } \\
& \leq & \frac{1}{\abs{B_R \cap \Int^d}} \abs{\{\norm{m} < R: m \in \partial_E\mathcal W(x)\} } \\
& \approx & \frac{1}{\mathrm{vol}(B_R)} \mathrm{vol}(\partial_E\mathcal W(x) \cap B_R),\quad \textrm{(if $R$ is sufficiently large).}
\end{eqnarray*}
Hence
$$\limsup_{R \to\infty} \frac{1}{\abs{B_R \cap \Int^d}} \abs{\{\abs{m} < R: 0 \in \partial_E\mathcal W(T^m(x))\} }  \leq \limsup_{R \to \infty} \frac{1}{\mathrm{vol}(B_R)}\sup_{x\in X} \mathrm{vol}(\partial_E \mathcal W(x)\cap B_R).$$
Since $x$ is arbitrary, this proves the desired conclusion.
\end{proof}

\begin{thm}\label{URP}
Consider the minimal free dynamical system $(X, T, \Int^d)$. Then, for any $\eps>0$ and $N \in\mathbb N$, there is an open set $\Omega\subseteq X$ such that
$$T^{-n}(\Omega),\quad n\in\{0, 1, ..., N-1\}^d$$
are mutually disjoint (hence form a Rokhlin tower), and
$$\mathrm{ocap}\left(X\setminus\bigcup_{n\in \{0, 1, ..., N-1\}^d}T^{-n}(\Omega)\right) < \eps.$$ In other words, the system $(X, T, \Int^d)$ has the Uniform Rohklin Property in the sense of Definition 7.1 of \cite{Niu-MD-Z} (see Lemma 7.2 of \cite{Niu-MD-Z}).
\end{thm}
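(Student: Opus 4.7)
The plan is to apply the Rokhlin tower construction developed earlier in this subsection to the tiling $\mathcal{W} = \mathcal{W}_H$, with the auxiliary parameters $U', U, \varphi$ chosen so that every non-empty tile of $\mathcal{W}_H$ contains a large inscribed ball; the error set appearing in \eqref{ocap-control} will then have small orbit capacity.

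First I would fix open sets $U' \subseteq \overline{U'} \subseteq U \subsetneq X$ and a continuous cut-off $\varphi$, producing the constants $M \leq L$, and then set $H > (L+\sqrt{d})^2$. By Lemma \ref{cut-down-large}, choosing $M$ large enough forces every non-empty tile $W_H(x, n)$ to contain a $d$-dimensional ball of any preassigned radius $r$. With $\mathcal{W} = \mathcal{W}_H$, define $\Omega$ exactly as at the start of this subsection. The computations carried out in \eqref{upper-set}--\eqref{ocap-control} then already yield that $\Omega$ is open, that $\{T^{-n}(\Omega): n \in \{0, \ldots, N-1\}^d\}$ is a Rokhlin family, and that
\[
\mathrm{ocap}\bigl(X \setminus \bigcup_n T^{-n}(\Omega)\bigr) \leq \mathrm{ocap}\bigl(\{x \in X: \mathrm{dist}(0, \partial \mathcal{W}_H(x)) \leq 2N\sqrt{d}\}\bigr),
\]
so the only remaining task is to make this residual orbit capacity smaller than $\eps$.

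For this I would invoke Lemma \ref{ocap-control-lem} to reduce the estimate to bounding
\[
\limsup_{R \to \infty} \frac{1}{\mathrm{vol}(B_R)} \sup_{x \in X} \mathrm{vol}\bigl(\partial_{2N\sqrt{d}} \mathcal{W}_H(x) \cap B_R\bigr),
\]
and control the numerator by counting tiles. Property (5) of Lemma \ref{tiling-properties} bounds the diameter of each tile of $\mathcal{W}_H$ by $2(L+\sqrt{d})$, so as a convex body its topological boundary has $(d-1)$-measure at most a constant depending on $L$ and $d$; hence the $(2N\sqrt{d})$-thickening of this boundary has $d$-volume at most a constant $C = C(L, d, N)$. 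Since every non-empty tile contains a ball of radius $r$, at most $O_{L,d}(\mathrm{vol}(B_R)/r^d)$ tiles meet $B_R$. Multiplying, the displayed ratio is at most $O(1/r^d)$, and taking $r$ (and hence $M$) large enough gives the desired bound by $\eps$.

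The main obstacle I foresee is the compatibility of the parameter choices: shrinking $U'$ in general enlarges both $M$ and $L$ simultaneously, and one must check that the inscribed-ball radius $r = r(M)$ provided by Lemma \ref{cut-down-large} eventually dominates $L$ so that the ratio actually goes to zero. This is exactly the reason for working with the added-dimension construction of \cite{GLT-Zk}, which decouples the tile diameter from the inscribed-ball radius; once that decoupling is exploited, the rest of the argument is the routine bookkeeping outlined above.
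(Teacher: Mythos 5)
Your skeleton is the same as the paper's: build $\Omega$ from a continuous equivariant tiling as at the start of this subsection, use \eqref{upper-set}--\eqref{ocap-control} to reduce to the orbit capacity of $\{x:\mathrm{dist}(0,\partial\mathcal W(x))\leq 2N\sqrt d\}$, and then use Lemma \ref{ocap-control-lem} to reduce to the boundary density $\limsup_{R}\frac{1}{\mathrm{vol}(B_R)}\sup_x\mathrm{vol}(\partial_{2N\sqrt d}\mathcal W(x)\cap B_R)$. The paper stops there and quotes Lemma 4.2 of \cite{GLT-Zk}, which asserts exactly that this density can be made $<\eps$; the gap is in your attempt to reprove that fact by tile counting. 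Each tile of $\mathcal W_H$ sits inside a ball of radius $L+\sqrt d$ (Lemma \ref{tiling-properties}(5)) and contains a ball of radius $r$ (Lemma \ref{cut-down-large}), so your count gives a density of order $C(L,d,N)/r^d$ with $C(L,d,N)\gtrsim L^{d-1}N$. You correctly flag that one must check that $r$ eventually dominates $L$, but your claim that the added-dimension construction ``decouples the tile diameter from the inscribed-ball radius'' is not true in the sense you need: $M$ is a separation constant for $U$ while $L$ is a return-time constant for $U'$, and for a general minimal free system there is no bound on $L$ in terms of $M$ --- shrinking $U$ to achieve a prescribed $M$ (hence a prescribed $r$) can force $L$ to be arbitrarily large. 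The quantity $L^{d-1}N/r^d$ therefore need not tend to $0$, and ``taking $r$ large enough'' does not close the argument.

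The mechanism actually behind Lemma 4.2 of \cite{GLT-Zk} is different and never asks $r$ to beat $L$. Lemma \ref{cut-down-large} says that for every $(a,-sH)\in V(x,n)$ the ball $B_r(\frac as+(1-\frac1s)n)$ lies in $W_H(x,n)$; equivalently, the $r$-neighbourhood of the shrunken tile $n+\frac1s(W_{sH}(x,n)-n)$ is contained in $W_H(x,n)$, so every point of that shrunken tile is at distance at least $r$ from $\partial\mathcal W_H(x)$. Hence, once $M$ is large enough that $r>2N\sqrt d$, the set $\partial_{2N\sqrt d}\mathcal W_H(x)$ is contained in the complement of $\bigcup_n\bigl(n+\frac1s(W_{sH}(x,n)-n)\bigr)$, and since the $W_{sH}(x,n)$ tile $\Real^d$ with uniformly bounded diameter while each shrunken copy carries a fraction $s^{-d}$ of its tile's volume, that complement has upper density $1-s^{-d}$. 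Choosing $s>1$ with $1-s^{-d}<\eps$ \emph{first}, and only then $M$ and $H$, gives the required bound: the smallness comes from $s\to 1$, not from $r\to\infty$. If you do not want to reproduce this argument, the clean fix is to do what the paper does and simply invoke Lemma 4.2 of \cite{GLT-Zk}.
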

\begin{proof}
By Lemma 4.2 of \cite{GLT-Zk}, there is an equivariant $\Real^d$-tiling $x\mapsto \mathcal W(x)$ such that 
$$\limsup_{R \to \infty} \frac{1}{\mathrm{vol}(B_R)}\sup_{x\in X} \mathrm{vol}(\partial_{2N\sqrt{d}} \mathcal W(x)\cap B_R) < \eps.$$ Then, the statement follows from \eqref{ocap-control} and Lemma \ref{ocap-control-lem} (with $E=2N\sqrt{d}$).
\end{proof}

\subsection{The two towers}

The Rokhlin tower constructed above in general cannot cover the whole space $X$. Consider the two continuous tiling functions $\mathcal W_{sH}$ and $\mathcal W_{H}$, and consider the Rokhlin towers $\mathcal T_0$ and $\mathcal T_1$ constructed from them respectively. It is still possible that $\mathcal T_0$ together with $\mathcal T_1$ do not cover the whole space $X$. However, in the following theorem, one can show that the complement of the tower $\mathcal T_0$ can be cut into pieces and then each piece can be translated into the tower $\mathcal T_1$ in a way that the order of the overlaps of the translations are universally bounded, and the intersection of the translations with each $\mathcal T_1$-orbit is uniformly small. This eventually leads to a  Cuntz comparison of open sets for minimal free $\Int^d$-actions (Theorem \ref{open-set-comparison}).

\begin{thm}\label{two-towers}
Consider a minimal free dynamical system $(X, T, \Int^d)$. Let $N\in\mathbb N$ and $\eps>0$ be arbitrary. There exist two Rokhlin towers $$\mathcal T_{0}:=\{T^{-m}(\Omega_{0}): m \in \{0, 1, ..., N_0-1\}^d\}\quad \textrm{and}\quad \mathcal T_{1}:=\{T^{-m}(\Omega_{1}): m \in \{0, 1, ..., N_1-1\}^d\},$$ with $N_0, N_1 \geq N$ and $\Omega_0, \Omega_1\subseteq X$ open, an open cover $\{U_1, U_2, ..., U_K\}$ of  $X \setminus \bigcup_{m} T^{-m}(\Omega_{0})$, and $h_1, h_2, ..., h_K \in\Int^d$ such that
\begin{enumerate}
\item $T^{h_k}(U_k) \subseteq  \bigcup_{m} T^{-m}(\Omega_{1})$, $k=1, 2, ..., K$;
\item the open sets $$T^{h_k}(U_k),\quad k=1, 2, ..., K,$$ can be grouped as 
$$
\left\{
\begin{array}{l}
T^{h_{1}}(U_{1}), ..., T^{h_{s_1}}(U_{s_1}), \\
T^{h_{s_1+1}}(U_{s_1+1}), ..., T^{h_{s_2}}(U_{s_2}), \\
\cdots \\
T^{h_{s_{m-1}+1}}(U_{s_{m-1}+1}), ..., T^{h_{s_m}}(U_{s_m}),
\end{array}
\right.
$$
for some $m \leq (\lfloor2\sqrt{d}\rfloor + 1)^d$, such that the open sets in each group are mutually disjoint; 
\item for each $x\in \Omega_{1}$, one has
         $$\frac{1}{N_1^d} \abs{\left\{m \in \{0, 1, ..., N_1-1\}^d: T^m(x) \in \bigcup_{k=1}^K T^{n_k}(U_{k})\right\}} < \eps.$$
\end{enumerate}
\end{thm}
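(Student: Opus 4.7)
The plan is to combine Lemma \ref{pre-two-towers} with the tiling-based Rokhlin tower construction from the beginning of this section. Given $N$ and $\eps$, set $N_0 := N$ and pick $R_0 \geq 2N_0\sqrt{d}$. Apply Lemma \ref{convex-small-bd-N-Z} to $\eps$ and $r := 2R_0 + 4 + \sqrt{d}/2$ to obtain a threshold $N^*$, put $N_1 := \max\{N, N^* + 1\}$, and choose $R_1 \geq \max\{R_0, 2N_1\sqrt{d}\}$. Invoke Lemma \ref{pre-two-towers} with these parameters, taking $M$ sufficiently large, to obtain continuous $\Int^d$-equivariant tiling functions $\mathcal W_{sH}$ and $\mathcal W_H$, an open cover $\{U_1, \ldots, U_K\}$ of $\beta_{R_0}(\mathcal W_{sH})$, and translations $n_1, \ldots, n_K \in \Int^d$ satisfying properties (\ref{insert-prop-1})--(\ref{insert-prop-3}) of that lemma.

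Next, following the construction at the start of this section, build the open set $\Omega_0$ from $\mathcal W_{sH}$ using parameter $N_0$ and $\Omega_1$ from $\mathcal W_H$ using parameter $N_1$, and set $h_k := n_k$ for $k = 1, \ldots, K$. For property (1) of the theorem: by \eqref{tower-subset} applied to $\mathcal W_{sH}$, the complement $X \setminus \bigcup_m T^{-m}(\Omega_0)$ is contained in $\beta_{2N_0\sqrt{d}}(\mathcal W_{sH}) \subseteq \beta_{R_0}(\mathcal W_{sH})$ and is therefore covered by the $U_k$'s; simultaneously, property (\ref{insert-prop-1}) of Lemma \ref{pre-two-towers} together with \eqref{tower-subset} applied to $\mathcal W_H$ gives
$$T^{h_k}(U_k) \subseteq \iota_{R_1}(\mathcal W_H) \subseteq \iota_{2N_1\sqrt{d}}(\mathcal W_H) \subseteq \bigcup_m T^{-m}(\Omega_1).$$
Property (2) is then immediate from property (\ref{insert-prop-2}) of Lemma \ref{pre-two-towers}.

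For property (3), fix $x \in \Omega_1$. By construction of $\Omega_1$ one has $0 \in \mathrm{int}(\mathcal W_H(x)_0) \cap \Int^d$ and $\mathrm{dist}(0, \partial \mathcal W_H(x)) > N_1\sqrt{d}$, so property (\ref{insert-prop-3}) of Lemma \ref{pre-two-towers}, applied with $c = 0$ and the lemma's $N$ taken to be $N_1$, delivers exactly the required density bound. The main difficulty is not analytic but organizational: the parameters must be chained so that (i) both tower heights exceed $N$, (ii) the $R_0$-boundary of $\mathcal W_{sH}$ absorbs the complement of the first tower, (iii) the $R_1$-interior of $\mathcal W_H$ is absorbed into the second tower, and (iv) the hypothesis $N_1 > N^*$ of Lemma \ref{pre-two-towers} holds. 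Since the hard dynamical and convex-geometric content is already packaged in Lemmas \ref{pre-two-towers} and \ref{convex-small-bd-N-Z}, no genuinely new estimates are needed beyond this bookkeeping.
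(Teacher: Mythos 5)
Your proposal is correct and follows essentially the same route as the paper's own proof: the same choice of parameters ($N_0=N$, $R_0$ of order $2N\sqrt{d}$, $N_1$ exceeding both $N$ and the threshold from Lemma \ref{convex-small-bd-N-Z} for $2R_0+4+\sqrt{d}/2$, and $R_1\geq\max\{R_0,2N_1\sqrt{d}\}$), the same construction of $\Omega_0$ and $\Omega_1$ from $\mathcal W_{sH}$ and $\mathcal W_H$ via \eqref{tower-subset}, and the same application of Lemma \ref{pre-two-towers}(3) with $c=0$. No substantive differences from the paper's argument.
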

\begin{proof}
Applying Lemma \ref{pre-two-towers} with $R_0 = 2N\sqrt{d}$, $\eps$, and some $s\in (1, 2)$, together with some $N_1>\max\{N(R_0, \eps), N\}$ (in place of $N$) and $R_1 > \max\{R_0, 2N_1\sqrt{d}\}$, where $N(R_0, \eps)$ is the constant of Lemma \ref{convex-small-bd-N-Z} with respect to $\eps$ and $2R_0+4+\sqrt{d}/2$,  there are two continuous equivariant $\Real^d$-tilings $\mathcal W_{sH}$ and $\mathcal W_H$ for some (sufficiently large) $H>0$, a finite open cover
$$U_1 \cup U_2 \cup \cdots \cup U_K \supseteq \beta_{R_0}(\mathcal W_{sH}),$$ and $n_1, n_2, ..., n_K \in \Int^d$ such that
\begin{enumerate}
\item\label{pre-prop-1} $T^{n_i}(U_i) \subseteq \iota_{R_1}(\mathcal W_H) \subseteq \iota_0(\mathcal W_H)$, $i=1, 2, ..., K$;
\item\label{pret-prop-2} the open sets $$T^{n_i}(U_i),\quad i=1, 2, ..., K,$$ can be grouped as 
$$
\left\{
\begin{array}{l}
T^{n_{1}}(U_{1}), ..., T^{n_{s_1}}(U_{s_1}), \\
T^{n_{s_1+1}}(U_{s_1+1}), ..., T^{n_{s_2}}(U_{s_2}), \\
\cdots \\
T^{n_{s_{m-1}+1}}(U_{s_{m-1}+1}), ..., T^{n_{s_m}}(U_{s_m}),
\end{array}
\right.
$$
with $m \leq (\lfloor2\sqrt{d}\rfloor + 1)^d$, such that the open sets in each group are mutually disjoint; 
\item\label{pre-prop-3} for each $x\in \iota_0(\mathcal W_H)$ and each  $c\in \mathrm{int}(\mathcal W_H(x)_0) \cap \Int^d$ with $\mathrm{dist}(c, \partial\mathcal W_H)>N_1\sqrt{d}$, one has
         $$\frac{1}{N_1^d} \abs{\left\{n \in \{0, 1, ..., N_1-1\}^d: T^{c+n}(x) \in \bigcup_{i=1}^K T^{n_i}(U_i)\right\}} < \eps.$$  
\end{enumerate}

Put
$$\Omega_{0}=\{x \in X: \mathrm{dist}(0, \partial \mathcal W_{sH}(x))> N\sqrt{d} \ \mathrm{and}\ \mathcal W_{sH}(x)_0 = W_{sH}(x, n), \ n = 0 \mod N\}.$$
Then $$T^{-m}(\Omega_0),\quad m\in\{0, 1, ..., N_0-1\}^d$$ form a Rokhlin tower with $N_0=N$, and by \eqref{tower-subset}
\begin{equation}
X\setminus \bigsqcup_{m \in \{0, 1, ..., N_0-1\}^d} T^{-m}(\Omega_0)  \subseteq  \{x \in X: \mathrm{dist}(0, \partial \mathcal W_{sH}(x)) \leq 2N\sqrt{d}\} = \beta_{2N\sqrt{d}}(\mathcal W_{sH}).
\end{equation}
Thus, $U_1, U_2, ..., U_K$ form an open cover of $X\setminus \bigsqcup_{m \in \{0, 1, ..., N_0-1\}^d} T^{-m}(\Omega_0)$.

Put $$\Omega_{1}=\{x \in X: \mathrm{dist}(0, \partial \mathcal W_H(x)) > N_1\sqrt{d} \ \mathrm{and}\ \mathcal W_{H}(x)_0 = W_{H}(x, n), \ n = 0 \mod N_1\}.$$
Then $$T^{-m}(\Omega_1),\quad m\in\{0, 1, ..., N_1-1\}^d,$$ form a Rokhlin tower, and by \eqref{tower-subset} (and the assumption that $R_1 > 2N_1\sqrt{d}$), 
\begin{equation}
 \bigsqcup_{m \in \{0, 1, ..., N_1-1\}^d} T^{-m}(\Omega_1)  \supseteq     \{x \in X: \mathrm{dist}(0, \partial \mathcal W_H) > 2N_1\sqrt{d}\} \supseteq \iota_{R_1}(\mathcal W_H).
\end{equation}
Thus, $T^{-h_i}(U_i) \subseteq \bigsqcup_{m \in \{0, 1, ..., N_1-1\}^d} T^{-m}(\Omega_1)$.

If $x\in \Omega_1$ (hence $x\in\iota_0(\mathcal W_H)$ and $\mathrm{dist}(0, \partial\mathcal W_H)>N_1\sqrt{d}$ ), it then follows from \eqref{pre-prop-3} (with $c=0$) that $$\frac{1}{N_1^d} \abs{\left\{m \in \{0, 1, ..., N_1-1\}^d: T^m(x) \in \bigcup_{k=1}^K T^{n_k}(U_{k})\right\}} < \eps,$$
as desired.
\end{proof}

\section{Cuntz comparison of open sets, comparison radius, and the mean topological dimension}

With the two-tower construction in the previous section, one is able to show that the C*-algebra $\mathrm{C}(X)\rtimes\Int^d$ has Cuntz-comparison on open sets (Theorem \ref{open-set-comparison}), and therefore the radius of comparison of $\mathrm{C}(X)\rtimes\Int^d$ is at most half of the mean dimension of $(X, T, \Int^d)$.

As a preparation, one has the following two very simple observations on the Cuntz semigroup of a C*-algebra.
\begin{lem}
Let $A$ be a C*-algebra, and let $a_1, a_2, ..., a_m \in A$ be positive elements. Then
$$[a_1] + [a_2] + \cdots + [a_m] \leq m[a_1+a_2+\cdots+a_m].$$
\end{lem}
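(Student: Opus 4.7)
The plan is to reduce the claim to the elementary fact that $a \leq b$ (as positive elements of a C*-algebra) implies $a \precsim b$ in the Cuntz sense, and then sum $m$ copies of the resulting inequality.

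First, set $b = a_1 + a_2 + \cdots + a_m$. Since each $a_j$ is positive, we have $a_i \leq b$ in $A^+$ for every $i = 1, 2, \ldots, m$. The standard approximation argument gives $[a_i] \leq [b]$: fix $\eps > 0$, and observe that the sequence $x_n := (a_i - \eps)_+^{1/2} b^{1/2}(b + \frac{1}{n})^{-1}$ satisfies $x_n b x_n^* \to (a_i - \eps)_+$, which together with the fact from Section 2 that $a \precsim b$ iff $(a-\eps)_+ \precsim b$ for all $\eps>0$ yields $a_i \precsim b$. Thus $[a_i] \leq [b]$ in the Cuntz semigroup $\mathrm{Cu}(A)$.

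Summing these $m$ inequalities in the (ordered) Cuntz semigroup gives
$$
[a_1] + [a_2] + \cdots + [a_m] \;\leq\; \underbrace{[b] + [b] + \cdots + [b]}_{m\ \mathrm{times}} \;=\; m[a_1 + a_2 + \cdots + a_m],
$$
which is the desired conclusion. There is no real obstacle here; the only subtlety is the routine verification that $a \leq b$ implies $a \precsim b$ in $A^+$, which is a well-known fact about the Cuntz preorder and can equivalently be seen by representing $[a_i]$ and $[b]$ as the classes of the diagonal matrices $\mathrm{diag}(0, \ldots, a_i, \ldots, 0)$ and $\mathrm{diag}(b, 0, \ldots, 0)$ in $\mathrm{M}_m(A)$, where the domination is immediate.
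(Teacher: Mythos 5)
Your proof is correct and takes essentially the same route as the paper: both reduce the claim to the fact that $a \leq b$ implies $a \precsim b$, applied to the domination $a_i \leq a_1 + \cdots + a_m$, together with compatibility of the Cuntz order with addition. The paper simply packages the $m$ inequalities into the single matrix inequality $\mathrm{diag}(a_1, \ldots, a_m) \leq \mathrm{diag}(a_1+\cdots+a_m, \ldots, a_1+\cdots+a_m)$ in $\mathrm{M}_m(A)$, which is exactly the alternative formulation you note at the end of your argument.
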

\begin{proof}
The lemma follows from the observation:
$$
\left(
\begin{array}{cccc}
a_1 & & &  \\
 & a_2 & & \\
& & \ddots & \\
& & & a_m  
\end{array}
\right) 
\leq 
\left(
\begin{array}{cccc}
a_1+\cdots+a_m & & &  \\
 & a_1+\cdots+a_m & & \\
& & \ddots & \\
& & & a_1+\cdots + a_m  
\end{array}
\right).
$$
\end{proof}

\begin{lem}\label{Cuntz-overlap}
Let $U_1, U_2, ..., U_K \subseteq X$ be open sets which can be divided into $M$ groups such that each group consists of mutually disjoint sets. Then
$$[\varphi_{U_1}] + \cdots + [\varphi_{U_K}] \leq M [\varphi_{U_1\cup\cdots\cup U_K}] = M [\varphi_{U_1} + \cdots + \varphi_{U_K}]$$
\end{lem}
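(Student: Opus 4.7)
The plan is to reduce the statement to the standard Cuntz-semigroup identity for orthogonal positive elements, applied within each of the $M$ groups. Fix a partition of $\{1, 2, \ldots, K\}$ into groups $G_1, \ldots, G_M$ witnessing the hypothesis, so that $\{U_i : i \in G_j\}$ is a collection of pairwise disjoint open sets for each $j$. The guiding principle I will invoke is the elementary fact that if $a_1, \ldots, a_\ell \in A^+$ are pairwise orthogonal (i.e., $a_p a_q = 0$ for $p \neq q$), then $[a_1] + \cdots + [a_\ell] = [a_1 + \cdots + a_\ell]$ in the Cuntz semigroup of $A$. This is immediate from the Murray--von Neumann equivalence $\mathrm{diag}(a_1, \ldots, a_\ell) \sim a_1 + \cdots + a_\ell$ realized by the $1\times\ell$ row $v = (a_1^{1/2}, \ldots, a_\ell^{1/2})$, where orthogonality kills the off-diagonal entries of $v^*v$.

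With that in hand I will proceed in three short steps. First, for each group $G_j$, the functions $\{\varphi_{U_i}\}_{i\in G_j}$ are pairwise orthogonal in $\mathrm{C}(X)$ because their open supports are pairwise disjoint, so the identity above gives
$$\sum_{i \in G_j} [\varphi_{U_i}] = \left[\sum_{i \in G_j} \varphi_{U_i}\right].$$
Second, since $0 \leq \sum_{i \in G_j} \varphi_{U_i} \leq \varphi_{U_1} + \cdots + \varphi_{U_K}$, the standard implication $a \leq b \Rightarrow a \precsim b$ yields $\left[\sum_{i \in G_j} \varphi_{U_i}\right] \leq [\varphi_{U_1} + \cdots + \varphi_{U_K}]$. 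Summing this inequality over the $M$ groups produces the desired bound $\sum_{i=1}^K [\varphi_{U_i}] \leq M[\varphi_{U_1} + \cdots + \varphi_{U_K}]$.

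Finally, for the rightmost equality $[\varphi_{U_1 \cup \cdots \cup U_K}] = [\varphi_{U_1} + \cdots + \varphi_{U_K}]$, I appeal to the example recorded earlier in the paper: for positive functions on a compact Hausdorff space, Cuntz equivalence is detected by the open support. Both functions have open support equal to $U_1 \cup \cdots \cup U_K$, hence define the same class in the Cuntz semigroup. No serious obstacle is expected; the lemma is essentially a bookkeeping consequence of the orthogonality identity combined with the monotonicity of $[\,\cdot\,]$, and the only minor care needed is to verify that the orthogonality/Murray--von Neumann step is properly cited rather than rederived.
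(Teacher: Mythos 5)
Your proof is correct and follows essentially the same route as the paper: use orthogonality within each group to collapse the sum of classes to a single class, then bound each of the $M$ resulting classes by $[\varphi_{U_1}+\cdots+\varphi_{U_K}]$ and identify that class with $[\varphi_{U_1\cup\cdots\cup U_K}]$ via equality of open supports. The only cosmetic difference is that you derive the factor $M$ from the pointwise inequality $\sum_{i\in G_j}\varphi_{U_i}\leq \varphi_{U_1}+\cdots+\varphi_{U_K}$ rather than citing the paper's preceding lemma, whose proof is the same diagonal-domination observation.
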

\begin{proof}
Write $U_1, U_2, ..., U_K$ as 
$$\{U_{1}, ..., U_{s_1}\}, \{U_{s_1+1}, ..., U_{s_2}\}, ..., \{U_{s_{m-1}+1}, ..., U_{s_M}\},$$
such that the open sets in each group are mutually disjoint. Then
$$[\varphi_{U_{s_i+1}}] + \cdots + [\varphi_{U_{s_{i+1}}}] = [\varphi_{U_{s_i+1}} + \cdots + \varphi_{U_{s_{i+1}}}] = [\varphi_{U_{s_i+1}\cup\cdots\cup U_{s_{i+1}} }],\quad i=0, 1, ..., M-1,$$
and together with the lemma above, one has
\begin{eqnarray*}
[\varphi_{U_1}] + \cdots + [\varphi_{U_K}] & = & [\varphi_{U_{1}} + \cdots + \varphi_{U_{s_1}}] + \cdots + [\varphi_{U_{s_{m-1}+1}} + \cdots + \varphi_{U_{s_M}}] \\
&= & [\varphi_{U_{1} \cup\cdots\cup U_{s_1}}] + \cdots + [\varphi_{U_{s_{m-1}+1} \cup \cdots \cup U_{s_M}}] \\
& \leq & M [\varphi_{U_1\cup\cdots\cup U_K}],
\end{eqnarray*}
as desired.
\end{proof}

\begin{defn}
Consider a topological dynamical system $(X, \Gamma)$, where $X$ is a compact metrizable space and $\Gamma$ is a discrete group acting on $X$ from the right, and consider a Rokhlin tower
$$\mathcal T=\{\Omega\gamma,\ \gamma \in \Gamma_0\},$$
where $\Omega\subseteq X$ is open and $\Gamma_0\subseteq \Gamma$ is a finite set containing the unit $e$ of the discrete group $\Gamma$. Define the C*-algebra
$$\textrm{C*}(\mathcal T):=\textrm{C*}\{u_\gamma \mathrm{C}_0(\Omega),\ \gamma\in\Gamma_0\}\subseteq \mathrm{C}(X) \rtimes \Gamma.$$ By Lemma 3.11 of \cite{Niu-MD-Z}, it is canonically isomorphic to $\mathrm{M}_{\abs{\Gamma_0}}(\mathrm{C}_0(\Omega))$, and $$\mathrm{C}_0(\bigcup_{\gamma\in\Gamma_0}\Omega\gamma ) \ni \phi \mapsto \mathrm{diag}\{\phi|_{\Omega\gamma_1}, \phi|_{\Omega\gamma_2}, ..., \phi|_{\Omega\gamma_{\abs{\Gamma_0}}}\} \in \mathrm{M}_{\abs{\Gamma_0}}(\mathrm{C}_0(\Omega))\}$$ under this isomorphism.
\end{defn}

The following comparison result essentially is a special case of Theorem 7.8 of \cite{Niu-MD-Z}.
\begin{lem}[Theorem 7.8 of \cite{Niu-MD-Z}]\label{diagonal-comparison}
Let $Z$ be a locally compact metrizable space, and consider $\mathrm{M}_n(\mathrm{C}_0(Z))$. Let $a, b \in \mathrm{M}_n(\mathrm{C}_0(Z))$ be two positive diagonal elements, i.e.,
$$a(t)=\mathrm{diag}\{a_1(t), a_2(t), ..., a_n(t)\} \quad\mathrm{and}\quad b(t)=\mathrm{diag}\{b_1(t), b_2(t), ..., b_n(t)\}$$
for some positive continuous functions $a_1, ..., a_n, b_1, ..., b_n: Z \to \Real$. If $$\mathrm{rank}(a(t)) \leq \frac{1}{4}\mathrm{rank}(b(t)),\quad t\in Z$$ and
$$4< \mathrm{rank}(b(t)),\quad t\in Z,$$
then $a \precsim b$ in $\mathrm{M}_n(\mathrm{C}_0(Z))$.
\end{lem}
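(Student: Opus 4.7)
My plan is to produce, for each $\eps > 0$, a Cuntz witness $(a - \eps)_+ \precsim b$, and then apply the half-cut lemma (Section 2 of \cite{RorUHF2}, recorded in the excerpt) to conclude $a \precsim b$. The witness will be assembled from local comparisons on an open cover of the (compact) open support of $(a - \eps)_+$, glued together by a partition of unity, and the factor of $4$ in the rank hypothesis is what will make the gluing go through.

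First I would fix $\eps > 0$ and set $K_\eps := \{t \in Z : \max_i a_i(t) > \eps\}^{\mathrm{cl}}$, which is compact because $a \in \mathrm{M}_n(\mathrm{C}_0(Z))$. For each $t_0 \in K_\eps$ let $I(t_0) = \{i : a_i(t_0) > \eps/2\}$ and pick $J(t_0) \subseteq \{j : b_j(t_0) > 0\}$ with $|J(t_0)| = 4|I(t_0)| + 1$; this is possible by the rank hypothesis together with $\mathrm{rank}(b(t_0)) > 4$. By continuity of the $a_i$ and $b_j$, a small open neighborhood $V_{t_0}$ of $t_0$ exists on which $a_i > \eps/2$ for $i \in I(t_0)$, $a_i < \eps$ for $i \notin I(t_0)$, and $b_j > \eta_{t_0}$ for some $\eta_{t_0} > 0$ and all $j \in J(t_0)$. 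Extract a finite subcover $V_1, \dots, V_L$ of $K_\eps$ with associated index sets $I_\ell, J_\ell$ and lower bounds $\eta_\ell$.

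On each patch $V_\ell$, the truncation $(a - \eps)_+|_{V_\ell}$ is supported in the $|I_\ell|$ diagonal slots indexed by $I_\ell$, while $b|_{V_\ell}$ has a diagonal subelement of rank $|J_\ell| \geq 4|I_\ell| + 1$ that is uniformly bounded below by $\eta_\ell$. Since this subelement is Cuntz-equivalent to a constant-rank projection on $V_\ell$ tensored with $\mathrm{C}_0(V_\ell)$, we get a local comparison $(a - \eps)_+|_{V_\ell} \precsim b|_{V_\ell}$ realized by an explicit diagonal permutation matrix composed with scaling. The role of the factor $4$ here is not merely to provide $|J_\ell| \geq |I_\ell|$ but to give enough excess copies so that each slot $b_j$ used on $V_\ell$ can be split into four roughly equal pieces $(b_j \cdot \rho_\ell^{1/2})$ for partition-of-unity functions $\rho_\ell$, with any given $j$ being invoked by at most $4$ distinct patches at any one point (which we arrange by standard two-coloring of the nerve of the refined cover, as in the $(\lfloor 2\sqrt{d}\rfloor +1)^d$-style grouping used in Lemma \ref{pre-two-towers}).

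Finally I would patch: choose a partition of unity $\{\rho_\ell\}_{\ell=1}^L$ on $K_\eps$ subordinate to the cover and form the global witness $x := \sum_\ell s_\ell^{1/2} \rho_\ell^{1/2}$, where $s_\ell$ is the local contraction built in the previous step, and verify directly that $x^* b x$ approximates $(a - \eps)_+$ uniformly. The main obstacle is the last verification: cross-terms from distinct patches $\ell \neq \ell'$ need to be controlled, and this is exactly where the $1/4$ factor (rather than, say, $1/2$) is spent; the point is that after the two-coloring of the nerve there are at most a bounded number of simultaneously active patches at each $t$, and the excess capacity $|J_\ell| \geq 4|I_\ell|$ lets each local witness be contracted by an additional factor so that the sum remains a bona fide Cuntz subequivalence up to an arbitrarily small error. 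Letting $\eps \to 0$ then yields $a \precsim b$.
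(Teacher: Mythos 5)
Your strategy is not the one the paper uses, and it contains a gap that I do not think can be repaired at the stated level of generality. The paper's own proof is only a reduction: it cuts $(a-\eps)_+$ down to a compact set $D\subseteq Z$ on which it is supported, applies Theorem 7.8 of \cite{Niu-MD-Z} as a black box to the restrictions in $\mathrm{M}_n(\mathrm{C}(D))$, and then extends the resulting witnesses $v_k$ back to $\mathrm{M}_n(\mathrm{C}_0(Z))$. You instead attempt to prove the compact case itself by a partition-of-unity gluing; that compact diagonal case is precisely the hard content of the cited theorem, and your argument for it does not go through.

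The gap is in the cross-term control. Your gluing requires that at each point $t$ the patches $V_\ell$ containing $t$ use pairwise disjoint sets of diagonal slots of $b$, so that the terms $\rho_\ell^{1/2}\rho_{\ell'}^{1/2}\,s_\ell^* b\, s_{\ell'}$ with $\ell\neq\ell'$ vanish or are small. For that you need the order (multiplicity) of the finite cover $\{V_\ell\}$ to be bounded, by about $4$, if each active patch is to be allotted $|I_\ell|$ of the roughly $4|I_\ell|$ available slots. But nothing bounds $\mathrm{dim}(Z)$: $Z$ is an arbitrary locally compact metrizable space, and a finite open cover of a compact subset of such a space cannot in general be refined to one of order at most $4$ --- admitting such refinements for every cover is exactly the statement that the covering dimension is at most $3$. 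The grouping constant $(\lfloor 2\sqrt{d}\rfloor+1)^d$ you borrow from Lemma \ref{pre-two-towers} is a Euclidean lattice fact with no analogue for the nerve of a cover of an abstract compact metric space. This is not a removable technicality: if a bounded-overlap gluing of local rank comparisons worked here, the same argument would bound $\mathrm{rc}(\mathrm{M}_n(\mathrm{C}(X)))$ independently of $\mathrm{dim}(X)$, which is false; the lemma survives only because $a$ and $b$ are diagonal, and your proposal uses diagonality only in the easy local step, not in the gluing where it is actually needed. (A smaller point: $\mathrm{rank}(b(t_0))\geq 4\,\mathrm{rank}(a(t_0))$ does not furnish $4|I(t_0)|+1$ strictly positive slots of $b$ when $|I(t_0)|=\mathrm{rank}(a(t_0))\geq 2$.) The reduction-to-compact-support portion of your plan is correct and matches the paper; what is missing is a valid proof of the compact diagonal case, which the paper obtains by citation rather than by a nerve-coloring argument.
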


\begin{proof}
It is enough to show that $(a-\eps)_+ \precsim b$ for arbitrary $\eps>0$. For a given $\eps>0$, there is a compact subset $D \subseteq Z$ such that $(a-\eps)_+$ is supported inside $D$. Denote by $\pi: \mathrm{M}_n(\mathrm{C}_0(Z)) \to \mathrm{M}_{n}(\mathrm{C}(D))$ the restriction map. One then has 
$$\mathrm{rank}(\pi((a-\eps)_+)(t)) \leq \frac{1}{4}\mathrm{rank}(\pi(b)(t)), \quad t\in D$$ and 
$$\frac{1}{n} < \frac{1}{4n}\mathrm{rank}(b(t)),\quad t\in D.$$
By Theorem 7.8 of \cite{Niu-MD-Z}, one has that $\pi((a-\eps)_+) \precsim \pi(b)$ in $\mathrm{M}_n(\mathrm{C}(D))$, that is, there is a sequence $(v_k) \subseteq \mathrm{M}_n(\mathrm{C}(D))$ such that $v_k(\pi(b))v_k^* \to \pi((a-\eps)_+)$ as $k \to \infty$. Extend each $v_k$ to a function in $\mathrm{M}_n(\mathrm{C}_0(Z))$, and still denote it by $v_k$. It is clear that the new sequence $(v_k)$ satisfies $v_kbv_k^* \to (a-\eps)_+$ as $k\to\infty$, and hence $(a - \eps)_+ \precsim b$, as desired.
\end{proof}

\begin{thm}\label{open-set-comparison}
Let $(X, T, \Int^d)$ be a minimal free dynamical system, and let $E, F\subseteq X$ be open sets such that
$$\mu(E) \leq \frac{1}{4}\nu(F),\quad \mu\in\mathcal M_1(X, T, \Int^d).$$ Then,
$$[\varphi_E] \leq ((2\lfloor\sqrt{d}\rfloor + 1)^d + 1)[\varphi_F]$$
in the Cuntz semigroup of $\mathrm{C}(X) \rtimes \Int^d$. In other words, the C*-algebra $\mathrm{C}(X) \rtimes \Int^d$ has $(\frac{1}{4}, (2\lfloor\sqrt{d}\rfloor + 1)^d + 1)$-Cuntz-comparison on open sets in the sense of Definition 4.1 of \cite{Niu-MD-Z}.
\end{thm}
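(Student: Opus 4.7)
The plan is to combine the two-tower decomposition of Theorem \ref{two-towers} with the diagonal Cuntz comparison of Lemma \ref{diagonal-comparison}, applied inside each Rokhlin sub-C*-algebra. Roughly: split $[\varphi_E]$ into a piece supported on the first tower $\mathcal T_0$, which yields one copy of $[\varphi_F]$, and a piece supported on the complement of $\mathcal T_0$, which is transported into the second tower $\mathcal T_1$ via the equivariant shifts $h_k$ at the cost of a factor $(\lfloor 2\sqrt d\rfloor+1)^d$ from Lemma \ref{Cuntz-overlap}.

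First I would fix a small $\eps>0$ and then a large $N$ so that the measure hypothesis $\mu(E)\le\tfrac14\nu(F)$, together with minimality and a uniform orbit-averaging argument, yields (after a harmless shrinking of $E$ to a compact set and a continuous replacement for $\chi_F$) a uniform pointwise orbit-block inequality
\[
\sum_{n\in\{0,\dots,N-1\}^d}\chi_E(T^n x)\,+\,\eps N^d\;\leq\;\tfrac14\sum_{n\in\{0,\dots,N-1\}^d}\chi_F(T^n x)\qquad(x\in X),
\]
with $N$ also large enough to beat the additive constant $4$ of Lemma \ref{diagonal-comparison}. Applying Theorem \ref{two-towers} with these choices produces Rokhlin towers $\mathcal T_0,\mathcal T_1$ with base sizes $N_0,N_1\ge N$, an open cover $U_1,\dots,U_K$ of $X\setminus\bigsqcup_m T^{-m}(\Omega_0)$, and shifts $h_1,\dots,h_K\in\Int^d$. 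Since $E\cap(X\setminus\bigsqcup_m T^{-m}(\Omega_0))\subseteq\bigcup_k U_k$, the open support of $\varphi_E$ is covered by $(E\cap\mathcal T_0)\cup\bigcup_k U_k$, so
\[
[\varphi_E]\;\leq\;[\varphi_{E\cap\mathcal T_0}]\;+\;\sum_{k=1}^K[\varphi_{U_k}].
\]
Conjugation by the unitary $u_{h_k}$ in the crossed product gives $[\varphi_{U_k}]=[\varphi_{T^{h_k}(U_k)}]$, and Lemma \ref{Cuntz-overlap} combined with the grouping in Theorem \ref{two-towers}(2) yields
\[
\sum_{k=1}^K[\varphi_{U_k}]\;\leq\;(\lfloor 2\sqrt d\rfloor+1)^d\,[\varphi_{\bigcup_k T^{h_k}(U_k)}].
\]

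Next I would compare each of the two remaining summands to $[\varphi_F]$ inside the subalgebras $\mathrm{C}^*(\mathcal T_i)\cong\mathrm M_{N_i^d}(\mathrm C_0(\Omega_i))$. In $\mathrm{C}^*(\mathcal T_0)$ the element $\varphi_{E\cap\mathcal T_0}$ is diagonal with rank at $x\in\Omega_0$ equal to $\#\{m\in\{0,\dots,N_0-1\}^d:T^{-m}x\in E\}$, and $\varphi_F$ restricted to the $\mathcal T_0$-tower is diagonal with rank $\#\{m:T^{-m}x\in F\}$; the uniform orbit-block inequality then provides the $\tfrac14$-rank comparison required by Lemma \ref{diagonal-comparison}, so $[\varphi_{E\cap\mathcal T_0}]\le[\varphi_F]$. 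In $\mathrm{C}^*(\mathcal T_1)$ the union $\bigcup_k T^{h_k}(U_k)$ sits inside $\mathcal T_1$ by property (1) of Theorem \ref{two-towers}, and property (3) bounds its diagonal rank at every $x\in\Omega_1$ by $\eps N_1^d$; for $\eps$ and $N_1$ as chosen this is again at most $\tfrac14$ of the rank of $\varphi_F$ restricted to the $\mathcal T_1$-tower, so a second application of Lemma \ref{diagonal-comparison} yields $[\varphi_{\bigcup_k T^{h_k}(U_k)}]\le[\varphi_F]$. Assembling the estimates produces $[\varphi_E]\le\bigl(1+(\lfloor 2\sqrt d\rfloor+1)^d\bigr)[\varphi_F]$, which is the required bound.

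The main obstacle I anticipate is converting the Banach-mean hypothesis $\mu(E)\le\tfrac14\nu(F)$, valid simultaneously for all invariant probability measures, into the uniform pointwise orbit-block inequality used above. The difficulty is that $\chi_E$ is only lower semicontinuous, so one has to approximate $E$ from inside by a compact set and $\chi_F$ from below by a continuous cutoff, and then appeal to the identity between the supremum of normalized ergodic sums and the supremum of integrals over $\mathcal M_1(X,T,\Int^d)$ to pass to a uniform-in-$x$ estimate with a small $\eps$-cushion. A secondary technical point is passing from the functions $\varphi_E,\varphi_F$ themselves to their open supports so that the compact-base hypothesis of Lemma \ref{diagonal-comparison} applies; this is handled by the standard $(a-\eps)_+$ cutoff together with the observation that Cuntz equivalence classes of $\varphi_E,\varphi_F$ depend only on the open sets $E,F$.
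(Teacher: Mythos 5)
Your proposal follows essentially the same route as the paper's proof: the same uniform orbit-counting inequality obtained from the measure hypothesis via weak*-limits of empirical measures, the same two-tower decomposition with the piece of $E$ outside $\mathcal T_0$ transported into $\mathcal T_1$ by the shifts, Lemma \ref{Cuntz-overlap} for the multiplicity $(\lfloor 2\sqrt d\rfloor+1)^d$, and two applications of Lemma \ref{diagonal-comparison} inside $\mathrm{C}^*(\mathcal T_0)\cong \mathrm{M}_{N_0^d}(\mathrm{C}_0(\Omega_0))$ and $\mathrm{C}^*(\mathcal T_1)$. The technical points you flag (inner approximation of $E$ by a compact set so the orbit inequality is uniform, the $(a-\eps)_+$ cutoff, and the minimality-based lower bound on the orbit density of $F$ needed for the rank-greater-than-$4$ hypothesis) are exactly the ones the paper addresses, and it resolves them in the way you describe.
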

\begin{proof}
Let $E$ and $F$ be open sets satisfying the condition of the theorem. Let $\eps>0$ be arbitrary. In order to prove the statement of the theorem, it is enough to show that $$(\varphi_E - \eps)_+ \precsim \underbrace{\varphi_F \oplus\cdots\oplus \varphi_F}_{(2\lfloor\sqrt{d}\rfloor + 1)^d + 1}.$$ 

For the given $\eps$, pick a compact set $E'\subseteq E$ such that 
\begin{equation}\label{cpt-E}
(\varphi_E-\eps)_+(x) = 0,\quad x\notin E'.
\end{equation} 
By the assumption of the theorem, one has that 
\begin{equation}\label{mea-small}
\mu(E') < \frac{1}{4} \mu(F),\quad \mu\in\mathcal M_1(X, T, \Int^n),
\end{equation}
and then there is $N\in\mathbb N$ such that for any $M > N$ and any $x\in X$, 
\begin{equation}\label{small-frq}
\frac{1}{M^d}\{m \in \{0, 1, ..., M-1\}^d: T^{-m}(x) \in E'\} < \frac{1}{4} \frac{1}{M^d}\{m \in \{0, 1, ..., M-1\}^d: T^{-m}(x) \in F\}.\end{equation}
Otherwise, there are sequences $N_k\in\mathbb N$, $x_k\in X$, $k=1, 2, ..., $ such that $N_k\to\infty$ as $k\to\infty$, and for any $k$, 
$$\frac{1}{N_k^d}\{m \in \{0, 1, ..., N_k-1\}^d: T^{-m}(x_k) \in E'\} \geq \frac{1}{4} \frac{1}{N_k^d}\{m \in \{0, 1, ..., N_k-1\}^d: T^{-m}(x_k) \in F\}.$$ That is
\begin{equation}\label{mea-contradic}
4\delta_{N_k, x_k}(E') \geq \delta_{N_k, x_k}(F),\quad k=1, 2, ... ,
\end{equation}
where $\delta_{N_k, x_k}=\frac{1}{N_k^d}\sum_{m\in\{0, 1, ..., N_k-1\}^d}\delta_{T^{-m}(x_k)}$ and $\delta_y$ is the Diract measure concentrated at $y$. 
Let $\delta_\infty$ be a limit point of $\{\delta_{N_k, x_k}, k=1, 2, ...\}$ and it is clear that $\delta_\infty\in \mathcal M_1(X, T, \Int^d)$.
Passing to a subsequence of $k$, one has 
\begin{eqnarray*}
\delta_\infty(F) &\leq & \liminf_{k\to\infty} \delta_{N_k, x_k}(F)\quad\quad\quad \textrm{($F$ is open)} \\
&  \leq & 4 \liminf_{k\to\infty} \delta_{N_k, x_k}(E')\quad\quad\quad \textrm{(by \eqref{mea-contradic})}\\
& \leq & 4 \limsup_{k\to\infty} \delta_{N_k, x_k}(E') \\
& \leq & 4 \delta_\infty(E'), \quad\quad\quad \textrm{($E'$ is closed)}
\end{eqnarray*}
which contradicts to \eqref{mea-small}.

With \eqref{cpt-E} and \eqref{small-frq}, one has that for any $M > N$ and any $x\in X$, 
\begin{eqnarray}\label{rank-ineq}
& & \frac{1}{M^d}\{m\in \{0, 1, ..., M-1\}^d: (\varphi_E-\eps)_+(T^{-m}(x)) > 0\} \\
&\leq & \frac{1}{M^d}\{m \in \{0, 1, ..., M-1\}^d: T^{-m}(x) \in E'\}\nonumber \\
&< & \frac{1}{4} \frac{1}{M^d}\{m \in \{0, 1, ..., M-1\}^d: T^{-m}(x) \in F\} \nonumber \\
& = & \frac{1}{4}\frac{1}{M^d}\{m\in \{0, 1, ..., M-1\}^d: \varphi_F(T^{-m}(x)) > 0\}. \nonumber
\end{eqnarray}
Also note that since $(X, T, \Int^d)$ is minimal, there is $\delta>0$ such that for any $M>N$, 
\begin{equation}\label{lbd-density}
\frac{1}{4M^d}\abs{\{ m \in \{0, 1, ..., M-1\}^d: \varphi_F(T^{-m}(x)) > 0\}} > \delta,\quad x\in X
\end{equation}

Let $$\mathcal T_0=\{T^{-m}(\Omega_0),\quad m\in \{0, 1, ..., N_0-1\}^d\}$$ and
$$\mathcal T_1=\{T^{-m}(\Omega_1),\quad m\in \{0, 1, ..., N_1-1\}^d\}$$
be the two towers obtained from Theorem \ref{two-towers} with respect to $\max\{N, \sqrt[d]{\frac{1}{\delta}}\}$ and $\delta$. Denote by $U_1, U_2, ..., U_K$ and $n_1, n_2, ..., n_K\in \Int^d$ be the open sets and group elements, respectively, obtained from Theorem \ref{two-towers}.

Pick $\chi_0 \in \mathrm{C}(X)^+$ such that 
\begin{equation}\label{cut-function}
\left\{ 
\begin{array}{ll}
\chi_0(x) = 1, & x\notin \bigcup_{k=1}^K U_k, \\
\chi_0(x) > 0, & x \in \bigsqcup_{m\in \{0, 1, ..., N_0-1\}^d} T^{-m}(\Omega_0), \\
\chi_0(x) = 0, & x \notin \bigsqcup_{m\in \{0, 1, ..., N_0-1\}^d} T^{-m}(\Omega_0).
\end{array}
\right.
\end{equation}
Note that then $(1-\chi_0)$ is supported in $U_1\cup U_2\cup\cdots\cup U_K$. Consider
$$(\varphi_{E} - \eps)_+ = (\varphi_{E} - \eps)_+(1-\chi_0) + (\varphi_{E} - \eps)_+\chi_0.$$
Then, for any $x\in \Omega_0$, it follows from \eqref{rank-ineq} and \eqref{cut-function} that
\begin{eqnarray*}
& & \abs{\{m\in\{0, 1, ..., N_0-1\}^d: ((\varphi_{E} - \eps)_+\chi_0)(T^{-m}(x))>0 \} }\\
& = & \abs{\{m\in\{0, 1, ..., N_0-1\}^d: (\varphi_{E} - \eps)_+(T^{-m}(x))>0 \}} \\
& < & \frac{1}{4} \abs{\{m\in\{0, 1, ..., N_0-1\}^d: \varphi_{F}(T^{-m}(x))>0 \} } \\
& = & \frac{1}{4} \abs{\{m\in\{0, 1, ..., N_0-1\}^d: (\varphi_{F}\chi_0)(T^{-m}(x))>0 \}}.
\end{eqnarray*}
Therefore, under the isomorphism $\textrm{C*}(\mathcal T_0) \cong \mathrm{M}_{N^d_0}(\mathrm C_0(\Omega_0))$, one has 
$$\mathrm{rank}(((\varphi_E-\eps)_+\chi_0)(x)) \leq \frac{1}{4}\mathrm{rank}((\varphi_F\chi_0)(x)),\quad x\in \Omega_0.$$
Moreover, it follows from \eqref{lbd-density} and the fact that $N_0> \sqrt[d]{\frac{1}{\delta}}$ that for any $x\in\Omega_0$, 
$$\frac{1}{4N_0^d}\mathrm{rank}((\varphi_F\chi_0)(x)) = \frac{1}{4N_0^d}\abs{\{m\in\{0, 1, ..., N_0-1\}^d: \varphi_F(T^{-m}(x)) > 0  \}}  > \delta > \frac{1}{N^d_0}.$$
Thus, by Lemma \ref{diagonal-comparison}, one has that 
\begin{equation}\label{first-tower}
(\varphi_E-\eps)_+\chi_0 \precsim \varphi_F\chi_0 \precsim \varphi_F.
\end{equation}

Consider $(\varphi_E-\eps)_+(1-\chi_0)$. Since $(1-\chi_0)$ is supported in $U_1 \cup U_2 \cup\cdots\cup U_K$, one has that $$(\varphi_E-\eps)_+(1-\chi_0) \precsim (1-\chi_0)  \precsim \varphi_{U_1 \cup\cdots\cup U_K} \sim \varphi_{U_1} + \cdots + \varphi_{U_K}\precsim \varphi_{U_1} \oplus \cdots \oplus \varphi_{U_K}.$$
On the other hand, by Lemma \ref{Cuntz-overlap}, 
$$\varphi_{T^{n_1}(U_1)} \oplus \cdots \oplus \varphi_{T^{n_K}(U_K)} \precsim \bigoplus_{(2\lfloor\sqrt{d}\rfloor + 1)^d} (\varphi_{T^{n_1}(U_1)} + \cdots + \varphi_{T^{n_K}(U_K)}).$$
Note that $\varphi_{U_i} \sim \varphi_{T^{n_i}(U_i)}$, $i=1, 2, ..., K$, and one has
\begin{equation}\label{second-tower}
(\varphi_E-\eps)_+(1-\chi_0) \precsim \bigoplus_{(2\lfloor\sqrt{d}\rfloor + 1)^d} (\varphi_{T^{n_1}(U_1)\cup\cdots\cup T^{n_K}(U_K)}).
\end{equation}
By Theorem \ref{two-towers},
\begin{equation}\label{density-small-sec}
\frac{1}{N_1^d}\abs{\left\{m \in\{0, 1, ..., N_1-1\}^d: T^{-m}(x) \in \bigcup_{k=1}^K T^{n_k}(U_k) \right\}} < \delta,\quad x\in\Omega_1.
\end{equation}
Let $\chi_1: X \to [0, 1]$ be a continuous function such that 
\begin{equation*}
\left\{ 
\begin{array}{ll}
\chi_1(x) > 0, & x \in \bigsqcup_{m\in \{0, 1, ..., N_1-1\}^d} T^{-m}(\Omega_1), \\
\chi_1(x) = 0, & x \notin \bigsqcup_{m\in \{0, 1, ..., N_1-1\}^d} T^{-m}(\Omega_1).
\end{array}
\right.
\end{equation*}
Then $$\frac{1}{4N_1^d}\mathrm{rank}((\varphi_F\chi_1)(x)) = \frac{1}{4N_1^d}\abs{\{m\in\{0, 1, ..., N_1-1\}^d: \varphi_F(T^{-m}(x)) > 0  \}} > \delta > \frac{1}{N^d_1},\quad x\in\Omega_1,$$
and hence, for any $x\in\Omega_1$, with \eqref{density-small-sec}, one has
\begin{eqnarray*}
\mathrm{rank}(\varphi_{T^{n_1}(U_1)\cup\cdots\cup T^{n_K}(U_K)}(x)) 
&=& \abs{\left\{m \in\{0, 1, ..., N_1-1\}^d: T^{-m}(x) \in \bigcup_{k=1}^K T^{n_k}(U_k) \right\}} \\
& < & N_1^d\delta  <  \frac{1}{4}\mathrm{rank}((\varphi_F\chi_1)(x)). 
\end{eqnarray*}
By Lemma \ref{diagonal-comparison},
$$\varphi_{T^{n_1}(U_1)\cup\cdots\cup T^{n_K}(U_K)} \precsim  \varphi_F\chi_1 \precsim\varphi_F,$$ and together with \eqref{second-tower} and \eqref{first-tower},
\begin{eqnarray*}
(\varphi_{E} - \eps)_+ & \precsim & (\varphi_{E} - \eps)_+(1- \chi_0) \oplus (\varphi_{E} - \eps)_+ \chi_0 \\
&\precsim & (\bigoplus_{(2\lfloor\sqrt{d}\rfloor + 1)^d} (\varphi_{T^{n_1}(U_1)\cup\cdots\cup T^{n_K}(U_K)})) \oplus \varphi_F \\
& \precsim & (\bigoplus_{(2\lfloor\sqrt{d}\rfloor + 1)^d} \varphi_F) \oplus \varphi_F,
\end{eqnarray*}
as desired.
\end{proof}

\begin{thm}\label{rc-mdim}
Let $(X, T, \Int^d)$ be a minimal free dynamical system. Then
$$\mathrm{rc}(\mathrm{C}(X) \rtimes \Int^d) \leq \frac{1}{2} \mathrm{mdim}(X, T, \Int^d).$$
\end{thm}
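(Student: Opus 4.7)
The plan is to invoke Theorem 8.8 of \cite{Niu-MD-Z} as a black box. That theorem takes a minimal free dynamical system $(X, T, \Gamma)$ (with $\Gamma$ amenable) satisfying two dynamical hypotheses, the Uniform Rokhlin Property and a Cuntz comparison property on open sets with some fixed constants $(\alpha, \beta)$, and concludes the bound $\mathrm{rc}(\mathrm{C}(X) \rtimes \Gamma) \leq \frac{1}{2}\mathrm{mdim}(X, T, \Gamma)$. Thus the job here is purely to verify that the two hypotheses have been installed, and then to quote the theorem.

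The first hypothesis, the Uniform Rokhlin Property in the sense of Definition 7.1 of \cite{Niu-MD-Z}, is exactly the content of Theorem \ref{URP}: for every $\eps > 0$ and $N \in \mathbb N$ one produces an open set $\Omega$ so that the translates $T^{-n}(\Omega)$ with $n \in \{0, \ldots, N-1\}^d$ form a Rokhlin tower whose complement has orbit capacity less than $\eps$. The second hypothesis, $(\alpha, \beta)$-Cuntz comparison on open sets for some $\alpha \in (0,1)$ and $\beta \in \mathbb N$, is what Theorem \ref{open-set-comparison} provides: the algebra $\mathrm{C}(X) \rtimes \Int^d$ has $(\frac{1}{4}, (2\lfloor\sqrt{d}\rfloor + 1)^d + 1)$-Cuntz comparison on open sets in the sense of Definition 4.1 of \cite{Niu-MD-Z}. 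Both dynamical facts are therefore already in hand.

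Putting these together, I would simply write: by Theorem \ref{URP} the system $(X, T, \Int^d)$ satisfies the Uniform Rokhlin Property, and by Theorem \ref{open-set-comparison} the crossed product $\mathrm{C}(X) \rtimes \Int^d$ has Cuntz comparison on open sets with the constants above; therefore Theorem 8.8 of \cite{Niu-MD-Z} applies and yields the inequality
\[
\mathrm{rc}(\mathrm{C}(X) \rtimes \Int^d) \leq \tfrac{1}{2}\,\mathrm{mdim}(X, T, \Int^d).
\]
There is no further obstacle at this stage, since all the real technical work, namely the adding-one-dimension/going-down construction, the two-tower lemma, and the diagonal comparison reduction, has already been carried out in the preceding sections in order to establish Theorems \ref{URP} and \ref{open-set-comparison}.
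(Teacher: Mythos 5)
Your proposal is correct and is essentially identical to the paper's own proof: the paper likewise combines Theorem \ref{URP} (Uniform Rokhlin Property) with Theorem \ref{open-set-comparison} ($(\frac{1}{4}, (2\lfloor\sqrt{d}\rfloor + 1)^d + 1)$-Cuntz comparison on open sets) and then quotes the general transfer theorem of \cite{Niu-MD-Z}. The only discrepancy is the citation label --- the paper's proof cites Theorem 4.7 of \cite{Niu-MD-Z} while its introduction (and you) cite Theorem 8.8 --- which is an internal inconsistency of the paper rather than a gap in your argument.
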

\begin{proof}
By Theorem \ref{open-set-comparison}, the C*-algebra $\mathrm{C}(X) \rtimes \Int^d$ has $(\frac{1}{4}, (2\lfloor\sqrt{d}\rfloor + 1)^d + 1)$-Cuntz-comparison on open sets. By Theorem \ref{URP}, the dynamical system $(X, T, \Int^d)$ has the Uniform Rohklin Property. Then the statement follows directly from Theorem 4.7 of \cite{Niu-MD-Z}.
\end{proof}

The following corollary generalizes Corollary 4.9 of \cite{EN-MD0} (where $d=1$) and generalizes the classifiability result of \cite{Szabo-Z} (where $\mathrm{dim}(X) < \infty$).
\begin{cor}\label{clasn}
Let $(X, T, \Int^d)$ be a minimal free dynamical system with mean dimension zero, then $\mathrm{C}(X) \rtimes \Int^d$ is classifiable. In particular, if $\mathrm{dim}(X) < \infty$, or $(X, T, \Int^d)$ has at most countably many ergodic measures, or $(X, T, \Int^d)$ has finite topological entropy, then $\mathrm{C}(X) \rtimes \Int^d$ is classifiable.
\end{cor}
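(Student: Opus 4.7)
The plan is to combine Theorem~\ref{rc-mdim} with the classification theorem for simple nuclear C*-algebras. Under the hypothesis $\mathrm{mdim}(X, T, \Int^d) = 0$, Theorem~\ref{rc-mdim} immediately gives $\mathrm{rc}(\mathrm{C}(X) \rtimes \Int^d) = 0$, which is equivalent to strict comparison of positive elements by traces.

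Next I would verify that $A := \mathrm{C}(X) \rtimes \Int^d$ satisfies the remaining hypotheses of the Elliott classification theorem, namely that it is simple, separable, unital, nuclear, $\mathcal{Z}$-stable, and satisfies the UCT. Simplicity follows from minimality and freeness of the action; nuclearity and the UCT follow from $\Int^d$ being amenable (e.g., by iterating the Pimsner--Voiculescu six-term sequence, or via Tu's theorem for amenable groupoids); separability and unitality are clear. For $\mathcal{Z}$-stability, I would combine the Uniform Rokhlin Property supplied by Theorem~\ref{URP} with strict comparison, using the machinery developed in \cite{Niu-MD-Z}. This last implication is the only substantive step and constitutes the main obstacle, but it is already packaged in that reference. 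Feeding all of this into the Elliott--Gong--Lin--Niu classification theorem then yields classifiability of $A$.

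For the `in particular' clause, the remark following the definition of $\mathrm{mdim}(X, T, \Int^d)$ already records that $\mathrm{dim}(X) < \infty$, the presence of at most countably many ergodic measures, and finite topological entropy each force $\mathrm{mdim}(X, T, \Int^d) = 0$ (citing \cite{Lindenstrauss-Weiss-MD} and \cite{Lind-MD}). Thus each of the three special cases reduces to the general statement proved in the first paragraph, and no further work is required.
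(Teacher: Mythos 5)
Your overall architecture---deduce strict comparison from $\mathrm{rc}=0$ via Theorem~\ref{rc-mdim}, check the standard hypotheses (simple, separable, unital, nuclear, UCT), establish a regularity property ($\mathcal Z$-stability or finite nuclear dimension), and feed everything into the classification theorem---matches the paper, and your handling of the ``in particular'' clause is exactly the paper's (it just cites the remark after the definition of mean dimension). The gap is in the one step you yourself identify as substantive: you assert that $\mathcal Z$-stability follows from the Uniform Rokhlin Property together with strict comparison ``using the machinery developed in \cite{Niu-MD-Z}'', but no such implication is packaged there. The results of \cite{Niu-MD-Z} that this paper invokes (Theorems 4.7, 7.8, 8.8, Lemma 3.11, Definitions 4.1 and 7.1) all concern Cuntz comparison and the comparison radius, not $\mathcal Z$-stability; and strict comparison alone is not known to imply $\mathcal Z$-stability in general---that is one direction of the Toms--Winter conjecture---so the step cannot simply be waved through. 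As written, the crucial regularity step of your proof rests on a reference that does not contain it.

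The paper closes this gap by a different route: since $\mathrm{mdim}(X,T,\Int^d)=0$, Corollary 5.4 of \cite{GLT-Zk} gives the small boundary property for $(X,T,\Int^d)$, and Corollary 9.5 of \cite{KS-comparison} then combines the small boundary property with strict comparison to yield finite nuclear dimension of $\mathrm{C}(X)\rtimes\Int^d$; classifiability then follows from \cite{GLN-TAS}, \cite{EN-K0-Z}, \cite{EGLN-DR}, and \cite{TWW-QD}. So the concrete ingredient missing from your argument is the small boundary property (or some other explicit bridge from strict comparison to $\mathcal Z$-stability or finite nuclear dimension). If you want to keep your URP-based route, you would need to locate and verify a theorem stating that the URP plus Cuntz comparison of open sets plus strict comparison implies $\mathcal Z$-stability; that is a genuine result in the literature, but it is not in \cite{Niu-MD-Z} and is not proved in this paper, so it cannot be cited as already done here.
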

\begin{proof}
If $(X, T, \Int^d)$ has mean dimension zero, then $\mathrm{rc}(\mathrm{C}(X) \rtimes \Int^d)=0$ by Theorem \ref{rc-mdim}; that is,  $\mathrm{C}(X) \rtimes \Int^d$ has strict comparison of positive elements. Note that, by Corollary 5.4 of \cite{GLT-Zk}, the dynamical system $(X, T, \Int^d)$ has small boundary property. Then by Corollary 9.5 of \cite{KS-comparison}, the C*-algebra  $\mathrm{C}(X) \rtimes \Int^d$ has finite nuclear dimension, and hence it is classifiable by \cite{GLN-TAS}, \cite{EN-K0-Z}, \cite{EGLN-DR}, and \cite{TWW-QD}.
\end{proof}

The following is a generalization of Corollary 5.7 of \cite{EN-MD0}.
\begin{cor}
Let $(X_1, T_1, \Int^{d_1})$ and $(X_2, T_2, \Int^{d_2})$ be minimal free dynamical systems where $d_1, d_2 \in\mathbb N$. Then the tensor product C*-algebra 
$(\mathrm{C}(X_1) \rtimes \Int^{d_1}) \otimes (\mathrm{C}(X_2) \rtimes \Int^{d_2})$ is classifiable.
\end{cor}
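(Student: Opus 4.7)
The plan is to view $(C(X_1)\rtimes\Int^{d_1})\otimes(C(X_2)\rtimes\Int^{d_2})$ as a single crossed product, verify that the combined dynamical system is still minimal and free, and then show that its mean dimension is zero so that Corollary \ref{clasn} applies directly.

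First I would invoke the standard identification
\[
(C(X_1)\rtimes\Int^{d_1})\otimes(C(X_2)\rtimes\Int^{d_2})\;\cong\; C(X_1\times X_2)\rtimes\Int^{d_1+d_2},
\]
where $\Int^{d_1+d_2}=\Int^{d_1}\times\Int^{d_2}$ acts on $X_1\times X_2$ by $T^{(n_1,n_2)}(x_1,x_2)=(T_1^{n_1}x_1,T_2^{n_2}x_2)$; this uses amenability of $\Int^{d_i}$ to identify the two crossed product completions and the standard bimodule argument for iterated crossed products. Freeness of the product action is immediate from freeness of each $T_i$. Minimality can be checked by slicing: any closed invariant set $Y\subseteq X_1\times X_2$ is in particular invariant under $\Int^{d_1}\times\{0\}$, so each non-empty horizontal fibre $Y\cap(X_1\times\{x_2\})$ is closed $T_1$-invariant and hence equals $X_1\times\{x_2\}$ by minimality of $T_1$; thus $Y=X_1\times Z$ for some closed $T_2$-invariant $Z\subseteq X_2$, and minimality of $T_2$ forces $Z\in\{\varnothing,X_2\}$.

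Next I would show that $\mathrm{mdim}(X_1\times X_2, T,\Int^{d_1+d_2})=0$. Because $X_1\times X_2$ is compact metrizable, a Lebesgue-number argument shows that finite product covers $\mathcal U_1\times\mathcal U_2$ (with $\mathcal U_i$ a finite open cover of $X_i$) are cofinal among all finite open covers, so it suffices to prove, for each such product cover,
\[
\lim_{N\to\infty}\frac{1}{N^{d_1+d_2}}\, D\Bigl(\bigvee_{n\in\{0,\dots,N-1\}^{d_1+d_2}} T^{-n}(\mathcal U_1\times\mathcal U_2)\Bigr)=0.
\]
The join factors as $\mathcal V_{1,N}\times\mathcal V_{2,N}$, where $\mathcal V_{i,N}=\bigvee_{n_i\in\{0,\dots,N-1\}^{d_i}} T_i^{-n_i}\mathcal U_i$. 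Subadditivity of $D$ under joins gives $D(\mathcal V_{i,N})\leq N^{d_i}\,D(\mathcal U_i)<\infty$ (since $\mathcal U_i$ is a fixed finite cover), and the standard subadditivity of covering dimension on products of compact metric spaces gives $D(\mathcal V_{1,N}\times\mathcal V_{2,N})\leq D(\mathcal V_{1,N})+D(\mathcal V_{2,N})$. Dividing by $N^{d_1+d_2}$ yields the bound $D(\mathcal U_1)/N^{d_2}+D(\mathcal U_2)/N^{d_1}$, which tends to $0$ since $d_1,d_2\geq 1$.

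Once minimality, freeness, and zero mean dimension of the product system are in hand, Corollary \ref{clasn} applied to $(X_1\times X_2, T,\Int^{d_1+d_2})$ immediately yields classifiability of $C(X_1\times X_2)\rtimes\Int^{d_1+d_2}$, hence of the original tensor product. The only non-routine step is the inequality $D(\mathcal V_1\times\mathcal V_2)\leq D(\mathcal V_1)+D(\mathcal V_2)$, which I expect to be the main obstacle; it reduces to picking near-optimal refinements $\mathcal W_i\preceq\mathcal V_i$ with $\mathrm{ord}(\mathcal W_i)=D(\mathcal V_i)$ and then invoking the classical additivity of covering dimension on their product to further refine $\mathcal W_1\times\mathcal W_2$ into a cover of order $D(\mathcal V_1)+D(\mathcal V_2)$.
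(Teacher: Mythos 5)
Your proposal is correct and follows essentially the same route as the paper: identify the tensor product with $\mathrm{C}(X_1\times X_2)\rtimes(\Int^{d_1}\times\Int^{d_2})$ and apply Corollary \ref{clasn} after checking that the product system is minimal, free, and has mean dimension zero. The only difference is that where the paper outsources the computation $\mathrm{mdim}(X_1\times X_2, T_1\times T_2, \Int^{d_1}\times\Int^{d_2})=0$ to Remark 5.8 of \cite{EN-MD0}, you supply the argument yourself (via $D(\mathcal V_{1,N}\times\mathcal V_{2,N})\leq N^{d_1}D(\mathcal U_1)+N^{d_2}D(\mathcal U_2)$ and the normalization by $N^{d_1+d_2}$), which is exactly the content of that remark.
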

\begin{proof}
Note that $$(\mathrm{C}(X_1) \rtimes \Int^{d_1}) \otimes (\mathrm{C}(X_2) \rtimes \Int^{d_2}) \cong \mathrm{C}(X_1 \times X_2) \rtimes (\Int^{d_1} \times \Int^{d_2}),$$ where $\Int^{d_1} \times \Int^{d_2}$ acting on $X_1 \times X_2$ by $$(T_1 \times T_2)^{(n_1, n_2)}((x_1, x_2)) = (T_1^{n_1}(x_1), T_2^{n_2}(x_2)),\quad n_1\in \Int^{d_1},\  n_2\in\Int^{d_2}.$$ By the argument of Remark 5.8 of \cite{EN-MD0}, one has $$\mathrm{mdim}(X_1 \times X_2, T_1 \times T_2, \Int^{d_1} \times \Int^{d_2}) = 0,$$ and the statement then follows from Corollary \ref{clasn}.
\end{proof}

\bibliographystyle{plainurl}
\bibliography{operator_algebras}

\begin{thebibliography}{10}

\bibitem{EGLN-DR}
G.~A. Elliott, G.~Gong, H.~Lin, and Z.~Niu.
\newblock On the classification of simple amenable {C*}-algebras with finite
  decomposition rank, {II}.
\newblock 07 2015.
\newblock URL: \url{http://arxiv.org/abs/1507.03437}, \href
  {http://arxiv.org/abs/1507.03437} {\path{arXiv:1507.03437}}.

\bibitem{EN-RC}
G.~A. Elliott and Z.~Niu.
\newblock On the radius of comparison of a commutative {C*}-algebra.
\newblock {\em Canad. Math. Bull.}, 56(4):737--744, 2013.
\newblock URL: \url{https://doi.org/10.4153/CMB-2012-012-9}, \href
  {http://dx.doi.org/10.4153/CMB-2012-012-9}
  {\path{doi:10.4153/CMB-2012-012-9}}.

\bibitem{EN-K0-Z}
G.~A. Elliott and Z.~Niu.
\newblock On the classification of simple amenable {C*}-algebras with finite
  decomposition rank.
\newblock In R.~S. Doran and E.~Park, editors, {\em ``Operator Algebras and
  their Applications: A Tribute to Richard V.~Kadison", Contemporary
  Mathematics}, volume 671, pages 117--125. Amer. Math. Soc., 2016.
\newblock \href {http://arxiv.org/abs/http://dx.dot.org/10.1090/conm/671/13506}
  {\path{arXiv:http://dx.dot.org/10.1090/conm/671/13506}}.

\bibitem{EN-MD0}
G.~A. Elliott and Z.~Niu.
\newblock The {C{$^*$}}-algebra of a minimal homeomorphism of zero mean
  dimension.
\newblock {\em Duke Math. J.}, 166(18):3569--3594, 2017.
\newblock URL: \url{https://doi.org/10.1215/00127094-2017-0033}, \href
  {http://dx.doi.org/10.1215/00127094-2017-0033}
  {\path{doi:10.1215/00127094-2017-0033}}.

\bibitem{GLN-TAS}
G.~Gong, H.~Lin, and Z.~Niu.
\newblock Classification of finite simple amenable {$\mathcal Z$}-stable
  {C*}-algebras.
\newblock 01 2015.
\newblock URL: \url{http://arxiv.org/abs/1501.00135}, \href
  {http://arxiv.org/abs/1501.00135} {\path{arXiv:1501.00135}}.

\bibitem{Gromov-MD}
M.~Gromov.
\newblock Topological invariants of dynamical systems and spaces of holomorphic
  maps. {I}.
\newblock {\em Math. Phys. Anal. Geom.}, 2(4):323--415, 1999.
\newblock URL: \url{https://mathscinet.ams.org/mathscinet-getitem?mr=1742309},
  \href {http://dx.doi.org/10.1023/A:1009841100168}
  {\path{doi:10.1023/A:1009841100168}}.

\bibitem{GLT-Zk}
Yonatan Gutman, Elon Lindenstrauss, and Masaki Tsukamoto.
\newblock Mean dimension of {$\Bbb{Z}^k$}-actions.
\newblock {\em Geom. Funct. Anal.}, 26(3):778--817, 2016.
\newblock URL: \url{http://dx.doi.org/10.1007/s00039-016-0372-9}, \href
  {http://dx.doi.org/10.1007/s00039-016-0372-9}
  {\path{doi:10.1007/s00039-016-0372-9}}.

\bibitem{Haagtrace}
U.~Haagerup.
\newblock Quasitraces on exact {$\textrm C^*$}-algebras are traces.
\newblock {\em C. R. Math. Acad. Sci. Soc. R. Can.}, 36(2-3):67--92, 2014.

\bibitem{KS-comparison}
D.~Kerr and G.~Szabo.
\newblock Almost finiteness and the small boundary property.
\newblock 07 2018.
\newblock URL: \url{https://arxiv.org/pdf/1807.04326}, \href
  {http://arxiv.org/abs/1807.04326} {\path{arXiv:1807.04326}}.

\bibitem{Lind-MD}
E.~Lindenstrauss.
\newblock Mean dimension, small entropy factors and an embedding theorem.
\newblock {\em Inst. Hautes {\'E}tudes Sci. Publ. Math.}, (89):227--262 (2000),
  1999.
\newblock URL: \url{http://www.numdam.org/item?id=PMIHES_1999__89__227_0}.

\bibitem{Lindenstrauss-Weiss-MD}
E.~Lindenstrauss and B.~Weiss.
\newblock Mean topological dimension.
\newblock {\em Israel J. Math.}, 115:1--24, 2000.
\newblock URL: \url{http://dx.doi.org/10.1007/BF02810577}, \href
  {http://dx.doi.org/10.1007/BF02810577} {\path{doi:10.1007/BF02810577}}.

\bibitem{Niu-MD-Z}
Z.~Niu.
\newblock Comparison radius and mean topological dimension: {R}okhlin property,
  comparison of open sets, and subhomogeneous {C*}-algebras.
\newblock {\em preprint}, 2019.

\bibitem{RorUHF2}
M.~R{\o}rdam.
\newblock On the structure of simple {C*}-algebras tensored with a
  {UHF}-algebra. {II}.
\newblock {\em J. Funct. Anal.}, 107(2):255--269, 1992.
\newblock URL: \url{http://dx.doi.org/10.1016/0022-1236(92)90106-S}, \href
  {http://dx.doi.org/10.1016/0022-1236(92)90106-S}
  {\path{doi:10.1016/0022-1236(92)90106-S}}.

\bibitem{Schneider-book}
R.~Schneider.
\newblock {\em Convex bodies: the {B}runn-{M}inkowski theory}, volume~44 of
  {\em Encyclopedia of Mathematics and its Applications}.
\newblock Cambridge University Press, Cambridge, 1993.
\newblock URL: \url{https://mathscinet.ams.org/mathscinet-getitem?mr=1216521},
  \href {http://dx.doi.org/10.1017/CBO9780511526282}
  {\path{doi:10.1017/CBO9780511526282}}.

\bibitem{Szabo-Z}
G.~Szab\'{o}.
\newblock The {R}okhlin dimension of topological {$\Bbb{Z}^m$}-actions.
\newblock {\em Proc. Lond. Math. Soc. (3)}, 110(3):673--694, 2015.
\newblock URL: \url{https://mathscinet.ams.org/mathscinet-getitem?mr=3342101},
  \href {http://dx.doi.org/10.1112/plms/pdu065}
  {\path{doi:10.1112/plms/pdu065}}.

\bibitem{TWW-QD}
A~Tikuisis, S.~White, and W.~Winter.
\newblock Quasidiagonality of nuclear {C*}-algebras.
\newblock {\em Ann. of Math. (2), to appear}, 09.
\newblock URL: \url{http://arxiv.org/abs/1509.08318}, \href
  {http://arxiv.org/abs/1509.08318} {\path{arXiv:1509.08318}}.

\bibitem{RC-Toms}
A.~S. Toms.
\newblock Flat dimension growth for {C*}-algebras.
\newblock {\em J. Funct. Anal.}, 238(2):678--708, 2006.

\end{thebibliography}

\end{document}